\newtheorem{theorem}{Theorem}[section]
\newtheorem{lemma}{Lemma}[section]
\newtheorem{conjecture}{Conjecture}[section]
\def\XXint#1#2#3{{\setbox0=\hbox{$#1{#2#3}{\int}$ }
\vcenter{\hbox{$#2#3$ }}\kern-.6\wd0}}
\newtheorem{prop}{Proposition}[section]
\newcommand{\ddbar}{i\partial\bar\partial}
\newcommand{\ric}{\mathrm{Ric}}
\newcommand{\eN}{\mathcal{N}}
\DeclareMathOperator{\vol}{Vol}
\DeclareMathOperator{\tr}{tr}
\numberwithin{equation}{section}
\begin{document}

\address{Department of Mathematics \& Computer Science, Rutgers University, Newark, NJ 07102, United States of America}

\email{bguo@rutgers.edu}

\address{Institute of Mathematics, Academy of Mathematics and Systems Science, Chinese Academy of Sciences, Beijing, 100190, People's Republic of China}
\email{wangjian@amss.ac.cn}

\address{
Department of Mathematics, Nanjing University, Nanjing, 210093, People's Republic of
China }
\email{shiyl@nju.edu.cn}

\address{Department of Mathematics, Rutgers University, Piscataway, NJ 08854, United States of America}
\email{jiansong@math.rutgers.edu}

\title{CscK metrics near the canonical class}  
\author{Bin Guo \and Wangjian Jian\and Yalong Shi\and Jian Song  }\date{}

\begin{abstract} Let $X$ be a K\"ahler manifold with semi-ample canonical bundle $K_X$. It is proved in \cite{JSS} that for any K\"ahler class $\gamma$,  there exists $\delta>0$ such that  for all $t\in (0, \delta)$ there exists a unique cscK metric $g_t$ in $K_X+ t \gamma $.  In this paper, 
we prove that $\{ (X, g_t) \}_{ t\in (0, \delta)} $ have uniformly bounded K\"ahler potentials, volume forms and diameters. As a consequence, these metric spaces are pre-compact in the Gromov-Hausdorff sense.

\end{abstract}

\thanks{B. Guo is supported in part by the National Science Foundation under grant  DMS-2303508 and the collaboration grant 946730 from Simons Foundation. W. Jian is supported in part by the National Natural Science Foundation of China under grants NSFC No.12371058 and No.12288201. Y. Shi is supported in part by NSFC No.12371058. J. Song is supported in part by the National Science Foundation under grant DMS-2203607.}

\maketitle

\section{Introduction}\label{sec:intro}

The existence of constant scalar curvature K\"ahler (cscK) metrics and the related moduli problem are   fundamental problems in complex differential geometry. The works of Chen-Cheng \cite{CC} prove that the cscK metric equation can be solved if the Mabuchi $K$-energy is proper. Such properness of the Mabuchi $K$-energy is closely related to the $J$-equation in the case when the canonical class of the underlying K\"ahler manifold is semi-positive \cite{Ch, We}. In fact, if $X$ is a minimal model, i.e., the canonical bundle $K_X$ is nef, it is proved in \cite{JSS, Zak} that there always exists a unique cscK metric in any K\"ahler class class sufficiently close to $K_X$. Naturally, one would like to establish a compactness result  and to gain further understanding of geometric degeneration for such cscK metrics in relation to the  moduli problem.

Let $X$ be a compact K\"ahler manifold of complex dimension $n$.  Suppose the canonical line bundle $K_X$ is {\em semiample}, i.e., $K_X^m$ is base point free for some $m \geq 1$. For sufficiently large $m\in \mathbb{Z}^+$, the linear system $|mK_X|$ induces a  holomorphic map 
\begin{equation}\label{plmap}
\pi: X\to \mathbb{CP}^N
\end{equation}
 for some $N\in\mathbb N$. $\pi(X)$, the image of $X$ via $\pi$, coincides with the unique algebraic canonical model $X_{can}$ determined by the canonical ring of $X$. The dimension $\kappa =\dim X_{can}$, is  the Kodaira dimension of $X$.  $X$ is of general type if $\kappa =n$ and in this case, there exists a geometric (singular) K\"ahler-Einstein metric $\omega_{can}$ on $X_{can}$ \cite{So} satisfying
$$Ric(\omega_{can}) = - \omega_{can}.$$  
If $\kappa < n$,  $\pi: X \rightarrow X_{can}$ is  a holomorphic fibration over $X_{can}$, whose generic fibre is a Calabi-Yau manifold. There exists a unique canonical (singular) K\"ahler metric $\omega_{can}$ \cite{ST1, ST2} on $X_{can}$ defined by
\begin{equation}
Ric(\omega_{can}) = - \omega_{can} + \omega_{WP}, 
\end{equation}
where $\omega_{WP}$ is a positive current induced the $L^2$ or Weil-Petersson metric of  Calabi-Yau fibration $\pi: X \rightarrow X_{can}$. 
Furthermore,  $\omega_{can}$ are smooth on $X_{can}^\circ$, away from the critical values of $\pi$.

We now fix a K\"ahler class $\gamma$ and consider the perturbation of $K_X$ by 
\begin{equation}\label{class}
\gamma_t = K_X + t \gamma
\end{equation}
for sufficiently small $t>0$. By the works of \cite{JSS}, there exists $\delta=\delta(\gamma)>0$ such that all $t \in (0, \delta)$, there exists a unique cscK metric $\omega_t \in \gamma_t$.  It is natural to ask what is the asymptotic behavior of this family of cscK metrics when $t\to 0$. The following is a natural extension of the conjecture in \cite{JSS}.

\begin{conjecture} \label{conj1}

The above cscK metric spaces $(X, \omega_t)$ converge to $\overline{(X_{can}^\circ, \omega_{can})}$, the metric completion of $(X_{can}^\circ, \omega_{can})$, in Gromov-Hausdorff topology as $t\rightarrow 0$. Furthermore, $\overline{(X_{can}^\circ, \omega_{can})}$ is homeomorphic to the algebraic variety $X_{can}$.  

\end{conjecture}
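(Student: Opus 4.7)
The plan is to combine the Gromov--Hausdorff pre-compactness proved in this paper with a rigidity argument that identifies every subsequential limit with the intended canonical space. I would fix a sequence $t_k\to 0$ and extract a subsequence so that $(X,\omega_{t_k})$ converges in GH topology to a compact metric space $(Z,d_Z)$. Since the pluricanonical map $\pi: X\to X_{can}\subset\mathbb{CP}^N$ is fixed independent of $t$, and $\pi^*\omega_{FS}\in mK_X$ can be compared uniformly against $\omega_t\in K_X+t\gamma$ using the uniform $L^\infty$ bound on the Kähler potential $\varphi_t$, I would construct a continuous surjection $F:Z\to X_{can}$ by projecting Cauchy sequences in $(X,\omega_{t_k})$ down to $X_{can}$. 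The uniform diameter bound and the volume estimates from the paper guarantee that $F$ is well-defined and surjective.

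The second step is to establish smooth convergence on the regular locus. Restricting to a relatively compact $U\Subset X_{can}^\circ$ over which $\pi$ is a smooth Calabi--Yau fibration, the cscK equation, together with the uniform $C^0$ bound for $\varphi_t$ and the $L^\infty$ bound for the volume form, should yield uniform higher-order estimates after a suitable fiber-average construction, so that $\pi_*\omega_{t_k}^n$ converges smoothly to a multiple of $\omega_{can}^\kappa$ on $U$ while the fiber volumes tend to zero. This is the analog, for the static cscK equation, of the Song--Tian collapsing estimates for the Kähler--Ricci flow. Granted these estimates, $F$ restricted to the preimage of $U$ becomes an isometry onto $(U,\omega_{can}|_U)$, and patching over an exhaustion of $X_{can}^\circ$ identifies $(Z,d_Z)$ with $\overline{(X_{can}^\circ,\omega_{can})}$ as metric spaces, which in particular shows that the GH limit is independent of the subsequence.

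For the homeomorphism claim, the surjection $F:Z\to X_{can}$ must be shown injective: if $F(z_1)=F(z_2)$ with $z_1\neq z_2$, then for chosen sequences $x_{k,i}\to z_i$ in $(X,\omega_{t_k})$ we would have $d_{\omega_{t_k}}(x_{k,1},x_{k,2})\geq c>0$ while $\pi(x_{k,1})$ and $\pi(x_{k,2})$ converge to a common point of $X_{can}$. For limit points lying in $X_{can}^\circ$ this contradicts the smooth convergence of the previous step; for points on the critical locus, the argument requires a diameter bound for $\pi^{-1}(V)$ in $(X,\omega_{t_k})$, where $V$ is a shrinking neighborhood of the critical value, uniform in $t_k$. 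Compactness of $Z$ then upgrades this continuous bijection to a homeomorphism $Z\cong X_{can}$.

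The chief obstacle lies in the smooth convergence step and, intertwined with it, the diameter non-degeneracy across the critical locus. In contrast to the Kähler--Ricci flow, the cscK equation offers no parabolic smoothing, so higher-order estimates must be extracted from the coupled fourth-order elliptic system for $(\varphi_t,R_t)$; deriving uniform $C^2$ and Ricci bounds on fixed compact subsets of $X_{can}^\circ$ in the collapsed $\kappa<n$ case is expected to require essentially new a-priori estimates beyond those in \cite{JSS} and the present paper. Controlling the geometry transverse to the singular fibers, needed for the injectivity of $F$ at the boundary of the regular locus, is the most delicate point and constitutes the final obstruction to a complete proof of the conjecture.
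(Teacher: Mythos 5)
This statement is Conjecture \ref{conj1}, which the paper explicitly leaves open: the paper proves only the uniform diameter bound, the relative volume non-collapsing estimate, and the resulting Gromov--Hausdorff pre-compactness (Theorem \ref{thm:main}), and does not identify the limit space. So there is no proof in the paper to compare against, and your proposal should be judged as a research programme rather than a proof. As a programme it is reasonable and consistent with what is known (it mirrors the general-type case, where Liu's smooth convergence on $X_{can}^\circ$ plus the diameter bound of \cite{GPSS22} come close to this picture), but it does not constitute a proof, and you yourself correctly flag where it breaks down.

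To be concrete about the gaps. First, the ``smooth convergence on the regular locus'' step is the heart of the matter and nothing in the paper supplies it: the available estimates are only $L^\infty$ bounds on $\varphi_t$ and $F_t$, and the cscK equation is a coupled fourth-order system with no known analogue of the Song--Tian parabolic collapsing estimates; uniform $C^2$ or Ricci bounds on compact subsets of $\pi^{-1}(X_{can}^\circ)$ in the collapsed case $\kappa<n$ are open. Without them, your map $F:Z\to X_{can}$ cannot be shown to be an isometry over $X_{can}^\circ$, nor can the limit metric be identified with the solution of $Ric(\omega_{can})=-\omega_{can}+\omega_{WP}$ (which requires passing to the limit in the equation, not just in the potentials). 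Second, injectivity of $F$ over the critical locus requires that $\mathrm{diam}(\pi^{-1}(V),\omega_{t})$ tends to zero uniformly as the neighborhood $V$ of the critical values shrinks; this kind of transverse non-degeneracy is not implied by the relative non-collapsing estimate \eqref{eqn:diam-noncollapsing} (which bounds volume ratios from below, not fiber diameters from above) and is unknown even for the K\"ahler--Ricci flow except in special cases. Finally, the claim that the metric completion $\overline{(X_{can}^\circ,\omega_{can})}$ is homeomorphic to $X_{can}$ is itself a separate open problem about the canonical metric, independent of the cscK family. Your write-up honestly labels these as obstructions, which is the right assessment: the argument is a plausible roadmap, not a proof.
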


When $X$ is of general type, it is proved in \cite{Liu} that $\omega_t$ converges smoothly to $\omega_{can}$ on $X_{can}^\circ$. Furthermore, $(X, g_t)$ have uniformly bounded diameter \cite{GPSS22}. The main goal of this paper is to establish uniform geometric bounds for $(X, g_t)$ for all $X$ with semi-ample $K_X$.

When $X$ is not of general type, i.e., $\kappa<n$, it is much more challenging to obtain both analytic and geometric estimates since the total volume approaches $0$ as $t\to 0$. In particular,  the corresponding cscK metrics must collapse, whereas there is very limited understanding for the behavior and regularity of collapsing canonical K\"ahler metrics.   We would like to point out that concerning the compactness of non-Einstein cscK metrics, the known results \cite{TV, CW} all implicitly require certain non-collapsing conditions and integral control of curvatures, neither of which holds in our study when $\kappa<n$. 

In   \cite{GPSS22, GPSS23},  geometric estimates such as diameter, lower bound of Green's function, Sobolev constants are established under the assumptions of normalized Nash entropy for the Monge-Amp\`ere measures, where collapsing is allowed to take place. Such estimates also lead to a relative volume non-collapsing $$\frac{\vol(B(x,R))}{\vol(X)}\geq c R^\alpha,\quad \forall R\in (0, {\mathrm{diam}(X,\omega)})$$
for some constants $c>0$ and $\alpha>0$, which suffices to conclude the Gromov-Hausdorff compactness in many cases. Our goal is to apply the Sobolev inequality to our study and to establish uniform $L^\infty$-estimates for local potentials and volume measure of the cscK metrics $\omega_t$, which we will also write as $g_t$.  Indeed, we prove the following theorem. 
\begin{theorem}\label{thm:main}
Let $X$ be an $n$-dimensional compact K\"ahler manifold with semiample canonical bundle $K_X$. For any K\"ahler class $\gamma$, there exists $\delta=\delta(\gamma)>0$ such that there exists a unique cscK metric $\omega_t\in K_X+t\gamma$ for $t\in (0, \delta)$ as in \cite{JSS}. Then there exists $\alpha=\alpha(n)>0$ and  $C=C(n, X, \gamma, \delta)$ such that for any $t\in (0,\delta)$, we have
\begin{equation}\label{eqn:diam-noncollapsing}
{\mathrm{diam}}(X, \omega_t) \le C, \quad \frac{\mathrm{Vol}_{\omega_t}(B_{\omega_t}(x, R))}{\mathrm{Vol}_{\omega_t}(X)} \ge C^{-1} R^\alpha,
\end{equation}
for any $R\in (0,1)$, where $B_{\omega_t}(x,R)$ denotes the geodesic ball in $(X,\omega_t)$ with center $x\in X$ and radius $R>0$. Consequently, the family of metric spaces $\{(X, \omega_t)\}_{t\in (0,\delta)}$ is precompact with respect to the Gromov-Hausdorff topology.
\end{theorem}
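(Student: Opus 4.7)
My plan is to reduce Theorem~\ref{thm:main} to the diameter and Sobolev machinery developed in \cite{GPSS22, GPSS23}, which promotes suitable $L^\infty$ and entropy-type bounds on the Monge-Amp\`ere density of a K\"ahler metric to a uniform diameter bound together with relative volume non-collapsing.

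\emph{Setup.} I would adapt the reference forms to the semi-ample structure. Let $\chi = \tfrac{1}{m}\pi^* \omega_{FS}|_X \geq 0$ be a smooth semi-positive representative of $K_X$ coming from the pluricanonical map \eqref{plmap}, and fix a smooth K\"ahler form $\omega_\gamma \in \gamma$. Then $\chi_t := \chi + t\,\omega_\gamma$ is K\"ahler in $K_X + t\gamma$ for $t > 0$, and I write $\omega_t = \chi_t + \ddbar \varphi_t$, normalized by $\sup_X \varphi_t = 0$. Fix a smooth volume form $\Omega$ on $X$ with $-\ddbar \log \Omega = \chi$. The cscK condition $S(\omega_t) = \underline{S}_t$ then becomes the Chen-Cheng coupled system
\begin{equation*}
\omega_t^n = e^{F_t}\,\Omega, \qquad \Delta_{\omega_t} F_t = \tr_{\omega_t}\chi - \underline{S}_t,
\end{equation*}
where $\underline{S}_t = n\,c_1(X) \cdot [\omega_t]^{n-1}/[\omega_t]^n$ is uniformly bounded for $t \in (0, \delta)$.

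\emph{Uniform potential and density estimates.} The heart of the proof, and the main obstacle, is to produce bounds uniform in $t$ for an appropriate normalization of $\varphi_t$ and for the Ricci potential $F_t$. When $\kappa < n$ the reference class degenerates and the total volume $V_t := [\chi_t]^n$ collapses as $t \to 0$, so the standard Kolodziej / Chen-Cheng $L^\infty$ estimates do not apply off the shelf. My approach would combine three ingredients: first, the properness of the Mabuchi $K$-energy and the functional bounds established along this family in \cite{JSS}, which yield uniform control of the entropy $\int_X F_t \, \omega_t^n$; second, an auxiliary Monge-Amp\`ere comparison in the spirit of Guo-Phong-Sturm, exploiting the semi-positive reference $\chi$ pulled back from $X_{can}$, to convert this entropy bound into an $L^\infty$ bound on $\varphi_t$ (suitably normalized by $V_t$) with constants independent of $t$; third, the second equation of the coupled system together with a maximum principle applied to $F_t + A\varphi_t$ for a suitable $A = A(n)$, to upgrade this to a uniform upper bound on $F_t$. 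The delicate point is that all constants must depend only on $n$, $X$, $\gamma$ and $\delta$; one must carefully track how the pullback structure $\chi = \tfrac{1}{m}\pi^* \omega_{FS}$ controls the collapsing regime.

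\emph{From the estimates to the conclusion.} Once uniform control of the Monge-Amp\`ere density $\omega_t^n$ is in hand---specifically a uniform bound of the type required in \cite{GPSS22, GPSS23} (e.g., a Nash entropy bound for $\omega_t^n$ relative to the fixed reference $\Omega$)---their main theorems apply directly and yield both the diameter upper bound $\mathrm{diam}(X, \omega_t) \leq C$ and the relative volume non-collapsing estimate in \eqref{eqn:diam-noncollapsing}. Gromov-Hausdorff precompactness is then a consequence of the classical Gromov compactness theorem: the compact metric spaces $(X, \omega_t)$ have uniformly bounded diameter, and the reverse-doubling condition in \eqref{eqn:diam-noncollapsing} produces a uniform upper bound on the $\varepsilon$-covering number, hence uniform total boundedness of the family $\{(X, \omega_t)\}_{t \in (0, \delta)}$.
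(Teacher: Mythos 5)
Your reduction of the theorem to the diameter/Sobolev machinery of \cite{GPSS22, GPSS23} is exactly the paper's proof of Theorem \ref{thm:main}: once one has $\|\varphi_t\|_{L^\infty}+\|F_t\|_{L^\infty}\le C$, the Nash entropy $\eN_{\theta,p}(\omega_t)$ is bounded for every $p$ and the density $\frac{1}{V_t}\frac{\omega_t^n}{\theta^n}=e^{F_t}$ is bounded below by a positive constant, so the hypotheses of the main theorem of \cite{GPSS22} hold with $\sigma$ a positive constant, giving \eqref{eqn:diam-noncollapsing}, and Gromov pre-compactness follows as you say. The second stage of your plan (entropy bound $\Rightarrow$ $L^\infty$ bounds via an auxiliary Monge--Amp\`ere equation in the style of Guo--Phong--Tong, then control of $F_t$ from the second equation) is also essentially the paper's \S\ref{sec:Linfty}, except that for the two-sided bound on $F_t$ in the collapsing regime the paper uses the mean-value inequality of \cite{GPSS22} applied to $F_t-\varphi_t$ and to $-F_t$ rather than a pointwise maximum principle; a maximum principle alone would not give the lower bound on $F_t$.

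The genuine gap is in your first ingredient: the uniform entropy bound $\int_X F_te^{F_t}\theta^n\le C$ does \emph{not} follow from ``the properness of the Mabuchi $K$-energy and the functional bounds established in \cite{JSS}.'' The constants in those properness statements are not uniform as $t\to 0$, precisely because the reference class degenerates. The paper's argument (\S\ref{sec:entropy}) has to work for this: it combines (i) the Chen--Tian decomposition \eqref{eqn:K} of $K_{\theta_t}$ with the fixed volume form $V_t\theta^n$, together with $K_{\theta_t}(\varphi_t)\le 0$; (ii) a family version of the $\alpha$-invariant estimate giving the entropy a lower bound by $2c_0(I_{\theta_t}-J_{\theta_t})(\varphi_t)-C$, which converts the problem into a uniform \emph{lower} bound for $J_{\eta+c_0\theta_t,\theta_t}(\varphi_t)$; and (iii), the technical heart, a $t$-independent $L^\infty$ bound for the minimizer $\phi_t$ of that $J$-functional, i.e.\ for the solution of the $J$-equation \eqref{eqn:J}. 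Step (iii) requires verifying a uniform cone condition for the degenerating references $\theta_t=\eta+t\theta$ (using $a_t\to\kappa/n$), Sun's concavity trick to get \eqref{eqn:sun}, and a Moser iteration that is only possible because of the uniform Sobolev inequality \eqref{eqn:Sob} of \cite{GPSS23} for the collapsing metrics $\theta_t$. None of this is present in your outline, and without it the entropy bound --- on which everything downstream depends --- is unsupported.
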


In fact, we obtain uniform estimates for the Sobolev constant and lower bound of the Green's function associated to $g_t$ as in \cite{GPSS22, GPSS23} due to the uniform estimates in Theorem \ref{thm:tech}. Consequently, for any sequence $\{t_j\}\to 0$, the metric spaces $(X, g_{t_j})$ subsequently converge in Gromov-Hausforff sense to a compact metric space $Z$. It is interesting to investigate the geometry of the limit space $Z$. If Conjecture \ref{conj1} holds, then the twisted K\"ahler-Einstein space $(X_{can}, g_{can})$ arises as the unique geometric limit of cscK metrics in the K\"ahler classes near the canonical $K_X$. This phenomena should be compared to the normalized K\"ahler-Ricci flow on $X$, where the solution converges to $g_{can}$ pointwise on $X_{can}^\circ$ with bounded diameter and scalar curvature \cite{ST3, JS}.

In the next section, we shall prove Theorem \ref{thm:main} assuming a uniform $L^\infty$ estimate of the cscK system. Then in \S\ref{sec:entropy}, we prove a uniform entropy bound based on a uniform $L^\infty$ estimate of $J$-equations. Finally, in \S\ref{sec:Linfty}, we prove the uniform $L^\infty$-estimate (Theorem \ref{thm:tech}) based on the entropy bound in \S\ref{sec:entropy}.

\section{Reduction to uniform a priori estimates for cscK system}

We let $\omega_{FS}$ be the Fubini-Study metric on $\mathbb{CP}^N$ from the pluricanonical map $\pi: X \rightarrow \mathbb{CP}^N$ in (\ref{plmap}) induced by $|mK_X|$ and let %
$$\eta = \frac{1}{m} \pi^*\omega_{FS} \in K_X,$$ which is a semipositive $(1,1)$-form.  
By Yau's theorem \cite{Y}, there is a unique K\"ahler metric $\theta \in \gamma$ such that $\ric(\theta) = -\eta$.   Let $\omega_t$ be the  unique cscK metric $\omega_t \in \gamma_t$ and let
$$\theta_t = \eta + t\theta \in \gamma_t$$  as in (\ref{class}) be the reference metric for each $t\in (0,\delta)$ for fixed $\delta=\delta(\gamma)>0$. Then there exists a unique $\varphi_t$ satisfying
$$\omega_t =\theta_t + \ddbar \varphi_t, ~ \sup_X \varphi_t =0.$$ 
Furthermore, $\varphi_t$ solves the following coupled system
\begin{equation}\label{eqn:cscK}
\left\{\begin{aligned}
&(\theta_t + \ddbar \varphi_t)^n = V_t e^{F_t} \theta^n\\
&\Delta_{\omega_t} F_t =  - \overline{R}_t  - \tr_{\omega_t} (\eta),
\end{aligned}\right.
\end{equation}
where  $ V_t = [\gamma_t]^n= \int_X \theta_t^n$ and $\overline{R}_t = \frac{n [K_X]\cdot [\gamma_t]^n }{[\gamma_t]^n}$. We also have $\int_X e^{F_t} \theta^n  = 1$ from the first equation of  \eqref{eqn:cscK}.

We shall prove that Theorem \ref{thm:main} follows from the following uniform $L^\infty$ estimates.  

\begin{theorem}\label{thm:tech}
There exists a uniform constant $C=C(n, X, \gamma, \delta,  \eta)>0$ such that for all $t\in (0, \delta)$, we have 
\begin{equation}\label{eqn:uniform}
\sup_{t\in (0,\delta)} \left(\| \varphi_t\|_{L^\infty(X)} + \| F_t\|_{L^\infty(X)} \right)\le C.
\end{equation} 
\end{theorem}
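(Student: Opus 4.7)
The plan is to leverage the entropy bound established in Section~\ref{sec:entropy} as the key integral input for a complex Monge-Amp\`ere $L^\infty$ estimate of Guo-Phong-Song-Sturm type, carefully adapted to the possibly collapsing regime where $V_t \to 0$. First, I would recast the first equation of \eqref{eqn:cscK} as a normalized Monge-Amp\`ere equation with probability density $e^{F_t}\theta^n$ on the right hand side, using $\int_X e^{F_t}\theta^n = 1$. The entropy-type bound from Section~\ref{sec:entropy} then provides a uniform $L\log L$-style control on this density, which is exactly the input required by the auxiliary Monge-Amp\`ere machinery of \cite{GPSS22, GPSS23}.

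Second, to establish $\|\varphi_t\|_{L^\infty} \leq C$, I would invoke that auxiliary Monge-Amp\`ere comparison. For each level $s > 0$, one solves an auxiliary Monge-Amp\`ere equation of the schematic form $(\theta_0 + \ddbar \psi_{t,s})^n = c_{t,s}\,(-\varphi_t - s)_+\, \theta_0^n$ with a fixed, $t$-independent background $\theta_0$. Integrating by parts against the cscK Monge-Amp\`ere, combining Ko{\l}odziej-type capacity arguments with Young's inequality and the entropy bound, yields an iteration inequality on the superlevel sets $\{-\varphi_t > s\}$ that closes into a uniform bound on $\|\varphi_t\|_{L^\infty}$. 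In the collapsing case the constants must be tracked carefully so as to remain independent of $V_t$, and the auxiliary construction must be carried out against a fixed reference rather than the degenerating $\theta_t$.

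Third, with $\varphi_t$ uniformly bounded, I would return to the second equation of \eqref{eqn:cscK} to control $F_t$. The upper bound follows from $\Delta_{\omega_t} F_t \leq -\overline{R}_t$ (since $\tr_{\omega_t}\eta \geq 0$) via Moser iteration against the Sobolev inequality for $\omega_t$, which itself becomes available as a consequence of the $\varphi_t$-bound through the Green's function machinery of \cite{GPSS22}. For the lower bound of $F_t$, I would run Moser iteration on $(-F_t)_+$, exploiting the favorable sign of the $\tr_{\omega_t}\eta$ term together with the topological identity $\int_X \tr_{\omega_t}\eta\, \omega_t^n = -\overline{R}_t V_t$ so as to convert the $\eta$-contribution into integral rather than pointwise data.

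The main obstacle I anticipate is the collapsing regime $\kappa < n$: $V_t \to 0$ and $\tr_{\omega_t}\eta$ is unbounded pointwise as $t \to 0$, so the auxiliary Monge-Amp\`ere construction and the Moser iteration for $F_t$ must both be calibrated so that their outputs are dimensionally consistent with the shrinking total mass. A subtler point is the two-way coupling of \eqref{eqn:cscK}: the entropy input for $F_t$ comes from a separate $J$-equation $L^\infty$ bound proved in Section~\ref{sec:entropy}, and the present argument must make do with only that entropy bound, taking care not to re-invoke any derivative or $L^p$ norm of $F_t$ that is not yet controlled, so as to avoid circularity in the overall scheme.
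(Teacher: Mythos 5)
Your overall architecture (entropy bound $\to$ auxiliary Monge-Amp\`ere comparison $\to$ $\|\varphi_t\|_{L^\infty}$ $\to$ bounds on $F_t$ via the Laplacian equation) is close in spirit to the paper's, but there is a genuine gap at the central step. You propose to deduce $\|\varphi_t\|_{L^\infty}\le C$ from the entropy bound alone, by solving auxiliary equations with density $(-\varphi_t-s)_+\,\theta_0^n$ and running a Ko{\l}odziej-type capacity/superlevel-set iteration. But the bound from Section \ref{sec:entropy} is only a $1$-Nash entropy bound, i.e.\ $L\log L$ control of $e^{F_t}$, and that is not sufficient input for Ko{\l}odziej's estimate or for the Guo--Phong--Tong scheme (both require $L^p$ with $p>1$, or a $p$-entropy with $p>n$). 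The paper's essential move, absent from your plan, is to use the \emph{second} equation of the coupled system already at this stage: the auxiliary equation is solved with density $\tau_k(-\varphi_t+\beta F_t)\,V_t e^{F_t}\theta^n$ (note the $\beta F_t$ inside $\tau_k$), and the test function $\Psi=-\varepsilon(-\psi_k+\Lambda)^{n/(n+1)}-\varphi_t+\beta F_t$ is estimated by the maximum principle using $\Delta_{\omega_t}F_t=-\overline R_t-\tr_{\omega_t}\eta$. This yields the pointwise bound $\beta F_t\le\epsilon(-\psi_k)+C_\epsilon$ and hence, by the $\alpha$-invariant applied to $\psi_k$, $\int_X e^{pF_t}\theta^n\le C_p$ for every $p$; only this exponential integrability makes the family version of Ko{\l}odziej applicable to bound $\varphi_t$. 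Without coupling $F_t$ into the auxiliary construction, your iteration does not close.

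A second, smaller problem is your upper bound for $F_t$. You invoke $\Delta_{\omega_t}F_t\le-\overline R_t$, but an upper bound on the Laplacian has the wrong sign for bounding $\sup_X F_t$: Moser iteration and the mean-value inequality require $\Delta_{\omega_t}u\ge-a$. Since $\Delta_{\omega_t}F_t=-\overline R_t-\tr_{\omega_t}\eta$ and $\tr_{\omega_t}\eta$ admits no pointwise upper bound, one must instead work with $u=F_t-\varphi_t$, for which $\Delta_{\omega_t}u\ge-\overline R_t-n$ because $\tr_{\omega_t}\theta_t\ge\tr_{\omega_t}\eta$; this is precisely why the $\varphi_t$ bound must precede the upper bound on $F_t$. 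Your treatment of the lower bound of $F_t$ (exploiting the favorable sign of $\tr_{\omega_t}\eta$ for $-F_t$) is correct. Note finally that the mean-value inequality for $\Delta_{\omega_t}$ used here itself requires the uniform $L^p$ bound on $e^{F_t}$, so it too depends on the bootstrapping step you omitted.
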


We remark that by Theorem \ref{thm:tech}, the metrics $\omega_t$ (after possibly passing to a subsequence) converge weakly  to a positive current with bounded local potentials on $X_{can}$. 

\begin{proof}[Proof of Theorem \ref{thm:main} assuming Theorem \ref{thm:tech}:] We first recall the following results of \cite{GPSS22} for the convenience of readers.

Let $(X,\omega_X)$ be a compact K\"ahler manifold. The $p$-Nash entropy of another K\"ahler form $\omega$ is
$$\eN_{\omega_X,p}(\omega):=\frac{1}{[\omega]^n}\int_X\Big|\log\Big(\frac{1}{[\omega]^n}\frac{\omega^n}{\omega_X^n}\Big)\Big|^p\omega^n.$$
Denote by $K(X)$ the set of K\"ahler metrics on $X$. Consider the class of K\"ahler metrics: 
$$W(\omega_X, A,p,K;\sigma):=\{\omega\in K(X)|\ [\omega]\cdot[\omega_X]^{n-1}\leq A, \eN_{\omega_X,p}(\omega)\leq K, \frac{1}{[\omega]^n}\frac{\omega^n}{\omega_X^n}\geq \sigma\},$$
where $\sigma\geq 0$ is a continuous function. Through Green function's estimates, Guo-Phong-Song-Sturm proved in \cite{GPSS22} that if $dim_H \{\sigma=0\}<2n-1$ and $p>n$, then we can find constants $C=C(\omega_X, n,A,p,K,\sigma)>0$, $c=c(\omega_X, n,A,p,K,\sigma)>0$ and $\alpha=\alpha(n,p)>0$ such that  for any $\omega\in W(\omega_X, A,p,K;\sigma)$, 
$${\mathrm{diam}}(X,\omega)\leq C$$
and for any $x\in X$ and $R\in (0,1]$,
$$\frac{\vol_{\omega}(B_{\omega}(x,R))}{\vol_\omega(X)}\geq cR^\alpha.$$

In our case, note that since
$$\eN_{\theta,p}(\omega_t):=\frac{1}{V_t}\int_X\Big|F_t|^p\omega_t^n,$$
if we have Theorem \ref{thm:tech}, then $N_{\theta,p}(\omega_t)\leq C^p$ for any $p$, and
$$\frac{1}{[\omega_t]^n}\frac{\omega_t^n}{\theta^n}=e^{F_t}\geq e^{-C},$$
where $C$ is the constant in Theorem \ref{thm:tech}.  So we can simply take $\sigma$ to be the constant function $e^{-C}$ and hence the conditions of the main theorem of \cite{GPSS22} are all fulfilled. Consequently, we obtain the desired uniform diameter bound and relative non-collapsing estimate \eqref{eqn:diam-noncollapsing}.

The proof of Gromov-Hausdorff pre-compactness from \eqref{eqn:diam-noncollapsing} is standard: from the relative non-collapsing estimate, for any $\epsilon>0$ sufficiently small, the maximal packing number using disjoint geodesic balls of radius $\epsilon$ is uniformly bounded from above. Then the family is pre-compact by Gromov's pre-compactness theorem (Proposition 5.2 of \cite{Gro}).
\end{proof}

\section{Uniform entropy bounds}\label{sec:entropy}

To prove Theorem \ref{thm:tech}, we first need a $1$-Nash entropy bound. Let $t\in (0,\delta)$ be fixed. All the relevant constants in this section are independent of $t$. 
Let $(\varphi_t, F_t)$ be the solution to the coupled system \eqref{eqn:cscK}. Note that since 
$$\eN_{\theta,1}(\omega_t)=\frac{1}{V_t}\int_X |F_t|\omega_t^n=\int_X |F_t|e^{F_t}\theta^n\leq \int_X F_te^{F_t}\theta^n+\frac{2}{e}[\theta]^n,$$
it suffices to bound $\int_X F_te^{F_t}\theta^n$ by the following proposition.

\begin{prop}\label{prop:2.1}
There is a constant $C>0$ that depends on $n, \theta, \eta$ such that 
\begin{equation}\label{eqn:entropy}
\frac{1}{V_t}\int_X \log \big(\frac{\omega_t^n}{V_t \theta^n} \big) \omega_t^n = \int_X F_t e^{F_t} \theta^n \le C.
\end{equation}
\end{prop}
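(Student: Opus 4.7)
The target $\int_X F_t e^{F_t}\theta^n$ equals the normalized entropy $V_t^{-1}\int_X F_t\omega_t^n$ of the volume ratio $\omega_t^n/(V_t\theta^n)$. My plan is variational: since $\omega_t$ is the unique minimizer of the Mabuchi $K$-energy $\mathcal{M}_t$ on $\gamma_t$ by the properness proved in \cite{JSS}, one has $\mathcal{M}_t(\varphi_t)\le \mathcal{M}_t(0)=0$. Via the Chen--Tian decomposition
$$\mathcal{M}_t(\varphi) = \mathrm{Ent}_{\theta_t}(\omega_\varphi) + \mathcal{J}^{-\ric(\theta_t)}(\varphi) - \overline{R}_t\,\mathcal{E}_{\theta_t}(\varphi),$$
with $\mathcal{E}_{\theta_t}$ the Aubin--Mabuchi energy and $\mathcal{J}^\chi$ the twisted $J$-functional, this rearranges to
$$\mathrm{Ent}_{\theta_t}(\omega_t)\le -\mathcal{J}^{-\ric(\theta_t)}(\varphi_t) + \overline{R}_t\mathcal{E}_{\theta_t}(\varphi_t).$$
Using $\ric(\theta_t) = -\eta + \ddbar\log(\theta^n/\theta_t^n)$, the twisted $J$-term coincides modulo a bounded error with $\mathcal{J}^\eta(\varphi_t)$.

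The Monge--Amp\`ere piece is routine: $\sup\varphi_t=0$ and $\theta_t\le C\omega_X$ for a fixed K\"ahler form $\omega_X$ force $\varphi_t$ to be $C\omega_X$-psh with $\sup=0$, so $\|\varphi_t\|_{L^1(\omega_X^n)}\le C$ by compactness of $\psh$ classes. Combined with the cohomological identity $\overline{R}_t V_t = -n[\eta]\cdot[\gamma_t]^{n-1}$, this gives $|\overline{R}_t\mathcal{E}_{\theta_t}(\varphi_t)|\le C$. The main obstacle is $-\mathcal{J}^\eta(\varphi_t)$, which cannot be controlled by the $L^1(\omega_X^n)$-bound alone because $\eta$ is only semi-positive and degenerates along the non-free locus of $|mK_X|$. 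I would handle this via the hypothesized uniform $L^\infty$-estimate for the $J$-equation: $\mathcal{J}^\eta$ is convex along weak geodesics in $\gamma_t$ and attains its minimum at the solution $\psi_t$, $\sup\psi_t = 0$, of
$$n\eta\wedge(\theta_t+\ddbar\psi_t)^{n-1} = -\overline{R}_t(\theta_t+\ddbar\psi_t)^n.$$
Given $\|\psi_t\|_{L^\infty}\le C$ uniformly, a direct expansion of $\mathcal{J}^\eta(\psi_t)$ using the $J$-equation identity yields $|\mathcal{J}^\eta(\psi_t)|\le C$, whence $\mathcal{J}^\eta(\varphi_t)\ge\mathcal{J}^\eta(\psi_t)\ge -C$, so $-\mathcal{J}^\eta(\varphi_t)\le C$.

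Combining the above gives $\mathrm{Ent}_{\theta_t}(\omega_t)\le C$. To pass from the entropy relative to $\theta_t^n$ to the one relative to $V_t\theta^n$, I would control the difference $V_t^{-1}\int_X\log(V_t\theta^n/\theta_t^n)\omega_t^n$ using the one-sided pointwise bound $\theta_t^n\le C V_t\theta^n$, which follows from the leading-order expansion $\theta_t^n = t^{n-\kappa}\binom{n}{\kappa}\eta^\kappa\wedge\theta^{n-\kappa} + O(t^{n-\kappa+1})$ together with $V_t \ge c t^{n-\kappa}$; this gives $\log(V_t\theta^n/\theta_t^n)\ge -C$ pointwise, and Jensen's inequality applied to $\omega_t^n/V_t$ then yields the upper bound $\int_X F_te^{F_t}\theta^n\le C$. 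The heart of the argument is the twisted $J$-functional bound via the $J$-equation's $L^\infty$-estimate; without this input the degeneracy of $\eta$ defeats every naive attempt, and the Mabuchi framework cannot be closed.
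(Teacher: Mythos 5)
Your overall architecture is the same as the paper's: use $K_{\theta_t}(\varphi_t)\le K_{\theta_t}(0)=0$, the Chen--Tian decomposition, and then a lower bound for the twisted $J$-term obtained by evaluating the twisted $J$-functional at the minimizer of a $J$-equation whose solution is uniformly bounded in $L^\infty$. The peripheral steps (the one-sided pointwise bound $\theta_t^n\le CV_t\theta^n$ to switch reference measures, and the boundedness of the energy term via $\sup_X\varphi_t=0$) are also consistent with what the paper does. But there is a genuine gap at the heart of your argument: you propose to bound $\mathcal{J}^\eta(\varphi_t)$ from below by its value at a minimizer $\psi_t$ solving the $J$-equation with twist $\eta$, and you ``hypothesize'' a uniform $L^\infty$ estimate for $\psi_t$. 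This input is not available. The form $\eta=\frac1m\pi^*\omega_{FS}$ is only semi-positive and, when $\kappa<n$, vanishes identically on fiber directions; the existence theory for the $J$-equation (\cite{SW,JSS}) and, more importantly, every known uniform $L^\infty$ estimate for it (in particular Sun's Moser iteration used in \S\ref{sec:entropy}) require a strictly positive twist: the iteration needs both the uniform cone condition \eqref{eqn:cone} and the two-sided comparison $c_0\theta_t\le\chi_t\le(1+c_0)\theta_t$ of \eqref{eqn:2.5}, and both fail for $\chi=\eta$. So your reduction replaces the proposition by an unproven (and likely ill-posed) statement about a degenerate $J$-equation --- as you yourself observe, without that input ``the degeneracy of $\eta$ defeats every naive attempt.''

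The missing idea is the $\alpha$-invariant absorption of Lemma \ref{lemma 2.1}. Since $\theta_t\le C\theta$, the uniform Tian--H\"ormander estimate $\int_Xe^{-\alpha_0\varphi_t}\theta^n\le C$ combined with Jensen's inequality gives
\begin{equation*}
\frac{1}{V_t}\int_X\log\Big(\frac{\omega_t^n}{V_t\theta^n}\Big)\,\omega_t^n\;\ge\;2c_0\big(I_{\theta_t}(\varphi_t)-J_{\theta_t}(\varphi_t)\big)-C .
\end{equation*}
Giving up half of the entropy term and using the identity $c_0(I_{\theta_t}-J_{\theta_t})(\varphi_t)+J_{\eta,\theta_t}(\varphi_t)=J_{\eta+c_0\theta_t,\theta_t}(\varphi_t)$, one only needs a uniform lower bound for $J_{\chi_t,\theta_t}(\varphi_t)$ with $\chi_t=\eta+c_0\theta_t\ge c_0\theta_t$ K\"ahler. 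For this nondegenerate twist the cone condition holds uniformly in $t$ (via $a_t=\kappa/n+O(t)$), the minimizer $\phi_t$ exists by \cite{JSS,SW}, and its uniform $L^\infty$ bound follows from Sun's iteration together with the uniform Sobolev inequality of \cite{GPSS23} for the collapsing reference metrics $\theta_t$. Without this strict-positivity upgrade your argument cannot be closed.
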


To prove this upper bound, we start with a family version of  the well-known $\alpha$-invariant argument in \cite{Ti}, based on a local version of \cite{H}:

\begin{lemma}\label{lemma 2.1}
There is a constant $c_0 = c_0(n, \theta, \eta)>0$ such that 
\begin{equation}\label{eqn:2.2}
\frac{1}{V_t}\int_X \log \big(\frac{\omega_t^n}{V_t \theta^n} \big) \omega_t^n \ge 2 c_0 (I_{\theta_t}(\varphi_t) - J_{\theta_t}(\varphi_t)) - C,
\end{equation}
for some uniform constant $C>0$, where $I_{\theta_t}, J_{\theta_t}$ are Aubin's functionals. 
\end{lemma}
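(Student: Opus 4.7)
The plan is to combine a Jensen-type variational lower bound on the entropy with a uniform-in-$t$ $\alpha$-invariant estimate for the family $\{\varphi_t\}$. First I would apply the concavity of $\log$ to the probability measure $e^{F_t}\theta^n$: for any continuous $G$ on $X$,
\[
\log \int_X e^G\, \theta^n \,=\, \log \int_X e^{G - F_t}\, e^{F_t}\theta^n \,\ge\, \int_X (G - F_t)\, e^{F_t}\theta^n,
\]
which after rearrangement gives the variational duality
\[
\int_X F_t\, e^{F_t}\theta^n \,\ge\, \int_X G\, e^{F_t}\theta^n \,-\, \log \int_X e^G\, \theta^n.
\]
I would then plug in $G = -2c_0 \varphi_t$ for a small constant $c_0>0$ to be chosen, so the first term on the right becomes $-\tfrac{2c_0}{V_t}\int_X \varphi_t\, \omega_t^n$ and the task reduces to bounding $\int_X e^{-2c_0\varphi_t}\theta^n$ uniformly in $t$.

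This uniform exponential integrability is the heart of the matter. The key observation is that $\theta_t = \eta + t\theta \,\le\, \omega^{\sharp} := \eta + \delta\theta$ for every $t \in (0,\delta)$, and $\omega^{\sharp}$ is a \emph{fixed} K\"ahler form because $\eta \ge 0$ while $\delta\theta$ is strictly positive. The condition $\theta_t + \ddbar \varphi_t \ge 0$ therefore implies
\[
\omega^{\sharp} + \ddbar \varphi_t \,=\, \theta_t + (\delta - t)\theta + \ddbar \varphi_t \,\ge\, 0,
\]
so the entire family $\{\varphi_t\}$ sits in $\psh(X, \omega^{\sharp})$ with $\sup_X \varphi_t = 0$. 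A global $\alpha$-invariant for the fixed K\"ahler manifold $(X,\omega^{\sharp})$, proved locally by covering $X$ with finitely many coordinate balls and invoking H\"ormander's $L^2$ estimate on each, in the spirit of \cite{H}, then supplies constants $c_0 = c_0(n,\theta,\eta) > 0$ and $C_0 = C_0(n,\theta,\eta) > 0$, both independent of $t$, such that
\[
\int_X e^{-2c_0 \varphi_t}\,\theta^n \,\le\, C_0.
\]

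It remains to convert $-V_t^{-1}\int_X \varphi_t\, \omega_t^n$ into $I_{\theta_t}(\varphi_t) - J_{\theta_t}(\varphi_t)$. Since the normalization $\sup_X \varphi_t = 0$ forces $\varphi_t \le 0$, Aubin's functional satisfies
\[
I_{\theta_t}(\varphi_t) \,=\, \frac{1}{V_t}\int_X \varphi_t\, \theta_t^n - \frac{1}{V_t}\int_X \varphi_t\, \omega_t^n \,\le\, -\frac{1}{V_t}\int_X \varphi_t\, \omega_t^n,
\]
and the standard inequality $0 \le J_{\theta_t}(\varphi_t) \le I_{\theta_t}(\varphi_t)$ gives $I_{\theta_t}(\varphi_t) \ge I_{\theta_t}(\varphi_t) - J_{\theta_t}(\varphi_t)$. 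Assembling the three ingredients produces the claimed inequality with $C = \log C_0$. The main obstacle is precisely the uniform $\alpha$-invariant: the reference form $\theta_t$ itself degenerates as $t \to 0$, so an $\alpha$-invariant computed directly with respect to $\theta_t$ need not be uniform in $t$; the decisive trick, available only thanks to the fixed upper cap $t < \delta$, is to majorize $\theta_t$ by the ambient K\"ahler form $\omega^{\sharp}$ and work there.
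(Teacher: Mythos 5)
Your proposal is correct and follows essentially the same route as the paper: a uniform $\alpha$-invariant obtained by dominating the degenerating forms $\theta_t$ by a fixed K\"ahler form (the paper uses $C\theta$, you use $\eta+\delta\theta$; same idea), followed by Jensen's inequality applied to the probability measure $e^{F_t}\theta^n$, and finally the standard comparison between $-V_t^{-1}\int_X\varphi_t\,\omega_t^n$ and $I_{\theta_t}-J_{\theta_t}$ under the normalization $\sup_X\varphi_t=0$. Your write-up merely makes explicit the one-sided inequality $\tfrac{1}{V_t}\int_X(-\varphi_t)\omega_t^n\ge I_{\theta_t}(\varphi_t)-J_{\theta_t}(\varphi_t)$, which the paper summarizes as ``equivalence of the functionals.''
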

\begin{proof}
Note that $\theta_t = \eta + t\theta \le C \theta$ for some uniform constant $C>0$. So, any function in $PSH(X,\theta_t)$ satisfies a uniform $\alpha$-invariant estimate (\cite{Ti, H}): for some $\alpha_0 = \alpha_0(n,X, \eta,\theta)>0$, 
$$\int_X e^{-\alpha_0 \varphi_t} \theta^n \le C.$$
This implies that $$ \frac{1}{V_t} \int_X e^{-\alpha_0 \varphi_t - F_t} \omega_t^n =  \int_X e^{-\alpha_0 \varphi_t - F_t} e^{F_t}\theta^n \le C.$$
Taking log on both sides and applying the Jensen's inequality, we obtain
$$\frac{1}{V_t} \int_X (-\alpha_0 \varphi_t - F_t) \omega_t^n \le \log C.$$
Rearranging the terms gives
\begin{equation}\label{eqn:new3.1}\frac{1}{V_t} \int_X F_t \omega_t^n \ge  \frac{\alpha_0}{V_t} \int_X (-\varphi_t)\omega_t^n - \log C.\end{equation}
The lemma then follows from equivalence of the functionals $I_{\theta_t} - J_{\theta_t}$ and $\frac{1}{V_t} \int_X(-\varphi_t) \omega_t^n$ and choosing $2c_0 = \alpha_0>0$.
\end{proof}

The proof of Proposition \ref{prop:2.1} makes use of the fact that the entropy of $F_t$ is a component of the Mabuchi $K$-energy. We recall the following modified form of Chen-Tian's formula for the $K$-energy: for any $\varphi\in PSH(X, \theta_t)$,
\begin{equation}\label{eqn:K}
K_{\theta_t} (\varphi) =\frac{1}{V_t} \int_X \log \big( \frac{\theta_{t,\varphi}^n}{V_t \theta^n} \big) \theta_{t,\varphi}^n - \frac{1}{V_t} \int_X \log \big( \frac{\theta_t^n}{V_t \theta^n} \big) \theta_t^n + J_{\eta,\theta_t} (\varphi),
\end{equation}
where $\theta_{t,\varphi}=\theta_t+\ddbar\varphi$.
In fact, the usual Chen-Tian formula gives
$$K_{\theta_t} (\varphi) =\frac{1}{V_t} \int_X \log \big( \frac{\theta_{t,\varphi}^n}{\theta_t^n} \big) \theta_{t,\varphi}^n + J_{-Ricc(\theta_t),\theta_t} (\varphi).$$
Since
\begin{align*}
\frac{1}{V_t} \int_X \log \big( \frac{\theta_{t,\varphi}^n}{\theta_t^n} \big) \theta_{t,\varphi}^n & =\frac{1}{V_t} \int_X \log \big( \frac{\theta_{t,\varphi}^n}{V_t \theta^n} \big) \theta_{t,\varphi}^n-\frac{1}{V_t} \int_X \log \big( \frac{\theta_t^n}{V_t \theta^n} \big) \theta_{t,\varphi}^n\\
&= \frac{1}{V_t} \int_X \log \big( \frac{\theta_{t,\varphi}^n}{V_t \theta^n} \big) \theta_{t,\varphi}^n-\frac{1}{V_t} \int_X \log \big( \frac{\theta_t^n}{V_t \theta^n} \big) \theta_t^n\\
&\quad  -\frac{1}{V_t} \int_X \log \big( \frac{\theta_t^n}{V_t \theta^n} \big) (\theta_{t,\varphi}^n-\theta_t^n).
\end{align*}
By writing $\theta_{t,\varphi}^n-\theta_t^n$ as $\int_0^1 \frac{d}{ds}\theta_{t,s\varphi}^n ds$, it is straightforward to check that the last term equals  $J_{\eta+Ricc(\theta_t),\theta_t} (\varphi)$, which in turn implies \eqref{eqn:K}.

It is well-known that cscK metrics are minimizers of $K$-energy, hence we have $K_{\theta_t}(\varphi_t)\le K_{\theta_t}(0) = 0$, which implies that (recall that $\omega_t = \theta_{t,\varphi_t}$)
\begin{align*}
 \frac{1}{V_t} \int_X \log \big( \frac{\theta_t^n}{V_t \theta^n} \big) \theta_t^n & \ge \frac{1}{V_t} \int_X \log \big( \frac{ \omega_{t}^n}{V_t \theta^n} \big) \omega_{t}^n  + J_{\eta,\theta_t}(\varphi_t)\\
\ge &  \frac 12\frac{1}{V_t} \int_X \log \big( \frac{ \omega_{t}^n}{V_t \theta^n} \big) \omega_{t}^n  + J_{\eta + c_0 \theta_t,\theta_t}(\varphi_t) -C,
\end{align*}
where we use Lemma \ref{lemma 2.1} and the following equation of the $J$-functionals
$$c_0 (I_{\theta_t} (\varphi_t) - J_{\theta_t} (\varphi_t) ) + J_{\eta, \theta_t}(\varphi_t) = J_{\eta+ c_0 \theta_t,\theta_t} (\varphi_t).$$ By straightforward calculations, we have $ \frac{1}{V_t} \int_X \log \big( \frac{\theta_t^n}{V_t \theta^n} \big) \theta_t^n \le C$ for some constant $C = C(n,\eta, \theta)>0$. Combining these inequalities, we get
\begin{equation}\label{eqn:2.3}
\frac{1}{V_t} \int_X \log \big( \frac{ \omega_{t}^n}{V_t \theta^n} \big) \omega_{t}^n  + 2 J_{\eta + c_0 \theta_t,\theta_t}(\varphi_t)\le C.
\end{equation}
To get an upper bound of $\frac{1}{V_t} \int_X \log \big( \frac{ \omega_{t}^n}{V_t \theta^n} \big) \omega_{t}^n$, from \eqref{eqn:2.3} we see that it suffices to prove a uniform {\em lower} bound of $J_{\eta + c_0 \theta_t,\theta_t}(\varphi_t)$. \\

In the next step, we will use the existence of minimizer of the $J_{\eta + c_0 \theta_t, \theta_t}$-functional to show the lower bound. For notational convenience, we denote $\chi_t = \eta + c_0 \theta_t$, which satisfies
\begin{equation}\label{eqn:2.5}
c_0 \theta_t \le \chi_t \le (1+c_0) \theta_t.
\end{equation}
By \cite{JSS,SW}, the minimizer $\phi_t\in PSH(X,\theta_t)$ of the $J_{\chi_t, \theta_t}$ exists and solves the $J$-equation
\begin{equation}\label{eqn:J}
(\theta_t + \ddbar \phi_t)^{n-1} \wedge \chi_t = c_t (\theta_t + \ddbar \phi_t)^n,
\end{equation}
where we normalize $\sup_X \phi_t = 0$ and $c_t = c_0 + a_t\ge c_0>0$ with 
\begin{equation}\label{at}
a_t : = \frac{1}{V_t}\int_X \theta_t^{n-1}\wedge \eta.
\end{equation}

We claim that if we can prove a uniform $L^\infty$ bound for the solutions $\phi_t$ of \eqref{eqn:J}, then we will finish the proof of Proposition \ref{prop:2.1}.

In fact, if $\phi_t$ is uniformly bounded, we can obtain a uniform lower bound for $J_{\eta + c_0 \theta_t,\theta_t}(\varphi_t)$ as follows. 
\begin{align*}
J_{\chi_t, \theta_t} (\varphi_t) \ge & ~ J_{\chi_t, \theta_t} (\phi_t)\\
 = &~ \frac{1}{V_t} \int_X \int_0^1 n \phi_t (\chi_t - c_t  \theta_{t,s\phi_t})\wedge (\theta_{t,s\phi_t})^{n-1} ds \ge -C
\end{align*}
for some uniform $C= C(||\phi_t||_{L^\infty(X)})>0$. Consequently, by \eqref{eqn:2.3}, we immediately get a uniform upper bound for $\int_X F_t e^{F_t}\theta^n$.\\

To prove the uniform $L^\infty$ bound for $\phi_t$, we will apply the trick of \cite{Sun} by Moser's iteration. However, we would need the uniform Sobolev inequality from \cite{GPSS23} for the reference metric $\theta_t$.

\begin{lemma}\label{lemma at} There exists $C>0$ such that for all $t\in (0, \delta)$, we have
 $$\left| a_t -  \frac{\kappa}{n} \right| \leq Ct,$$
 for $a_t$ in (\ref{at}).
\end{lemma}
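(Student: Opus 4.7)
My plan is to expand both the numerator $\int_X \theta_t^{n-1}\wedge\eta$ and the denominator $V_t=\int_X\theta_t^n$ as polynomials in $t$ via the binomial theorem, isolate the leading order in $t$ using the vanishing of high powers of $\eta$, and then compare the leading coefficients.

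The key structural input is that $\eta=\frac{1}{m}\pi^*\omega_{FS}$ is the pullback of a smooth form under the pluricanonical map $\pi:X\to X_{can}\subset\mathbb{CP}^N$. Since $\dim_{\mathbb{C}}X_{can}=\kappa$, the form $\omega_{FS}^j$ restricted to (the smooth locus of) $X_{can}$ vanishes for $j>\kappa$, hence $\eta^j\equiv 0$ on $X$ for $j>\kappa$. Applying the binomial theorem and discarding these vanishing terms gives
\begin{align*}
V_t &=\sum_{k=n-\kappa}^n\binom{n}{k}t^k\int_X\eta^{n-k}\wedge\theta^k
=t^{n-\kappa}\binom{n}{n-\kappa}A+O(t^{n-\kappa+1}),\\
\int_X\theta_t^{n-1}\wedge\eta &=\sum_{k=n-\kappa}^{n-1}\binom{n-1}{k}t^k\int_X\eta^{n-k}\wedge\theta^k
=t^{n-\kappa}\binom{n-1}{n-\kappa}A+O(t^{n-\kappa+1}),
\end{align*}
where $A:=\int_X\eta^\kappa\wedge\theta^{n-\kappa}$.

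The leading coefficient $A$ is strictly positive: on the Zariski-open set $U\subset X$ where $\pi$ has maximal rank $\kappa$, the $(\kappa,\kappa)$-form $\eta^\kappa$ is the pullback of a strictly positive form on $\pi(U)\subset X_{can}^\circ$, so $\eta^\kappa\wedge\theta^{n-\kappa}$ is a nonnegative $(n,n)$-form which is strictly positive on $U$. Therefore, dividing the two expansions (and using $A>0$ to expand the denominator as a geometric series for small $t$),
\begin{equation*}
a_t=\frac{\binom{n-1}{n-\kappa}A+O(t)}{\binom{n}{n-\kappa}A+O(t)}
=\frac{\binom{n-1}{n-\kappa}}{\binom{n}{n-\kappa}}+O(t)=\frac{\kappa}{n}+O(t),
\end{equation*}
since $\binom{n-1}{n-\kappa}/\binom{n}{n-\kappa}=\kappa/n$. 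This yields the claimed bound $|a_t-\kappa/n|\leq Ct$ on $(0,\delta)$.

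The computation is essentially combinatorial once the vanishing $\eta^{\kappa+1}=0$ is in hand; the only subtlety is justifying the strict positivity $A>0$, which underwrites the denominator expansion. I do not anticipate a serious obstacle here — it follows from the fact that $\pi$ has generic rank equal to $\kappa=\dim X_{can}$, so the locus of maximal rank is nonempty (indeed open and dense), and $\theta$ is a Kähler metric on $X$.
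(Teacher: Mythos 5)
Your proof is correct and is essentially the same computation as the paper's: expand $\int_X\theta_t^{n-1}\wedge\eta$ and $V_t$ binomially, use $\eta^j=0$ for $j>\kappa$ to identify the leading order $t^{n-\kappa}$, and compare the coefficients $\binom{n-1}{\kappa-1}$ and $\binom{n}{\kappa}$. The only difference is that you explicitly justify the strict positivity of $A=\int_X\eta^\kappa\wedge\theta^{n-\kappa}$ (which the paper leaves implicit but which is indeed needed to divide by the leading coefficient), so the argument is complete.
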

\begin{proof}
By direct computation, it follows that
\begin{align*}
a_t & = ~ \frac{  \sum_{i=0}^{\kappa - 1} t^{n-1-i} \binom{n-1}{i} \int_X\eta^{i+1} \wedge \theta^{n-1-i}  }{ \sum_{i=0}^\kappa t ^{n-i}\binom{n}{i} \int_X\eta^i \wedge \theta^{n-i}    } = \frac{\binom{n-1}{\kappa - 1}}{\binom{n}{\kappa }} + O(t) = \frac{\kappa}{n} + O(t).
\end{align*}
\end{proof}

\begin{lemma}[\cite{JSS}]
There exists a uniform $\delta_0>0$ such that 
\begin{equation}\label{eqn:cone}
n c_t ( \theta_t)^{n-1} - (n-1) \chi_t\wedge (\theta_t) ^{n-2}\ge \delta_0 (\theta_t)^{n-1},
\end{equation}
if $0< t\le \bar t $ for some $\bar t = \bar t (n,\theta,\eta)$ sufficiently small.
\end{lemma}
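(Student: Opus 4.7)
The plan is to reduce the $(n-1,n-1)$-form inequality to a pointwise scalar inequality on eigenvalues, via simultaneous diagonalization. Fix $p\in X$ and choose a holomorphic frame at $p$ in which $\theta_t = \sum_i i\,dz^i\wedge d\bar z^i$ and $\chi_t = \sum_i \mu_i\, i\,dz^i\wedge d\bar z^i$ is simultaneously diagonal (possible since $\theta_t>0$). Writing $e_j := \prod_{k\ne j} i\,dz^k\wedge d\bar z^k$, one has $\theta_t^{n-1} = (n-1)!\sum_j e_j$, and a short expansion of $\theta_t^{n-2}$ over complementary index pairs gives
\begin{equation*}
\chi_t\wedge \theta_t^{n-2} = (n-2)!\sum_j (\sigma_1 - \mu_j)\, e_j, \qquad \sigma_1 := \sum_i \mu_i.
\end{equation*}
Since the $e_j$ form a basis of strongly positive $(n-1,n-1)$-forms at $p$, the cone inequality \eqref{eqn:cone} is equivalent to the scalar condition $n c_t - \sigma_1 + \mu_j \ge \delta_0$ for every $j$.

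Next I would estimate the worst case over $j$ uniformly in $t$. Because $\chi_t = \eta + c_0\theta_t \ge c_0\theta_t$, every eigenvalue satisfies $\mu_j \ge c_0$. To control $\sigma_1$, I would diagonalize $\eta$ with respect to $\theta$ with eigenvalues $\lambda_i\ge 0$; then $\mu_i = c_0 + \lambda_i/(\lambda_i+t)$, and
\begin{equation*}
\sigma_1 = n c_0 + \tr_{\theta_t}\eta.
\end{equation*}
The key geometric input is that $\eta = \tfrac{1}{m}\pi^*\omega_{FS}$ is the pullback of a K\"ahler form on $X_{can}=\pi(X)$, a $\kappa$-dimensional variety, so its pointwise rank is at most $\kappa$. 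Hence at most $\kappa$ of the $\lambda_i$ are nonzero, and since each nonzero term $\lambda_i/(\lambda_i+t)$ lies in $(0,1)$, one obtains the uniform bound $\tr_{\theta_t}\eta \le \kappa$. Plugging in $c_t = c_0 + a_t$ yields
\begin{equation*}
n c_t - \sigma_1 + \mu_j \;\ge\; n(c_0+a_t) - (n c_0+\kappa) + c_0 \;=\; c_0 + n a_t - \kappa.
\end{equation*}

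Finally, Lemma \ref{lemma at} gives $n a_t - \kappa = n(a_t - \kappa/n) \ge -nCt$ for some $C=C(n,\theta,\eta)$, so the last display is at least $c_0 - nCt$. Choosing $\bar t := c_0/(2nC)$ and $\delta_0 := c_0/2$ gives the claim for all $t\in (0,\bar t]$. The only conceptual ingredient beyond elementary linear algebra is the rank bound $\mathrm{rank}(\eta)\le \kappa$, which is a direct consequence of semi-ampleness of $K_X$ via the factorization through $X_{can}$; once this is in hand, the argument is a clean pointwise computation combined with the asymptotic expansion already supplied by Lemma \ref{lemma at}.
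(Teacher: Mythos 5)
Your proof is correct, and it takes a genuinely different (and arguably cleaner) route than the paper's. The paper expands $nc_t\theta_t^{n-1}-(n-1)\chi_t\wedge\theta_t^{n-2}$ binomially in the basis of positive forms $\eta^i\wedge(t\theta)^{n-1-i}$, truncating the sums at $i=\kappa$ (which is exactly your rank bound in disguise, since $\eta^{\kappa+1}=0$ pointwise), and then checks that each coefficient $A_i=(na_t+c_0)\binom{n-1}{i}-(n-1)\binom{n-2}{i-1}=\binom{n-1}{i}(\kappa+c_0+O(t)-i)$ is at least $c_0/2$ for $i\le\kappa$, yielding $\delta_0=c_0/\bigl(2\max_i\binom{n-1}{i}\bigr)$. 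You instead diagonalize pointwise and reduce the cone condition to the scalar inequality $nc_t-\sigma_1+\mu_j\ge\delta_0$, which you verify using the same two inputs --- the asymptotics $na_t=\kappa+O(t)$ from Lemma \ref{lemma at} and the pointwise bound $\tr_{\theta_t}\eta\le\kappa$ coming from $\operatorname{rank}(d\pi)\le\dim X_{can}=\kappa$. What your approach buys is a sharper, dimension-free constant $\delta_0=c_0/2$ and a transparent identification of where the Kodaira dimension enters; what the paper's approach buys is that it never needs to invoke eigenvalues or the positivity criterion for diagonal $(n-1,n-1)$-forms, only that wedge products of semipositive forms are positive. Two small points of care in your write-up: the $e_j$ span only the diagonal $(n-1,n-1)$-forms (not all of them), which suffices here because the form in question is diagonal in your chosen frame; and the identity $\mu_i=c_0+\lambda_i/(\lambda_i+t)$ requires passing to the frame simultaneously diagonalizing $\eta$ and $\theta$, which is legitimate since both $\tr_{\theta_t}\eta$ and the bound $\mu_j\ge c_0$ are frame-independent.
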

\begin{proof}
This lemma follows from straightforward calculations as in \cite{JSS}. Indeed, we have
\begin{align*}
&~ n c_t \theta_t^{n-1} - (n-1) \chi_t\wedge\theta_t^{n-2}
\\
= & ~ nc_t (\eta + t\theta)^{n-1} - (n-1) \eta \wedge (\eta + t\theta)^{n-2} - (n-1)c_0\theta_t^{n-1}\\
= & ~ (na_t + c_0) \sum_{i=0}^\kappa \binom{n-1}{i} \eta^i \wedge (t \theta)^{n-1-i} - (n-1) \sum_{i=0}^{\kappa - 1} \binom{n-2}{i} \eta^{i+1}\wedge (t\theta)^{n-2-i}\\
=&~ (n a_t + c_0) (t\theta)^{n-1} + \sum_{i=1}^\kappa A_i \eta^i \wedge (t\theta)^{n-1-i},
\end{align*}
where the coefficients (for $i=1,\ldots, \kappa$)
\begin{align*}
A_i & = (na_t + c_0) \binom{n-1}{i} - (n-1)\binom{n-2}{i-1}\\
\text{(by Lemma \ref{lemma at}) \quad }& = (\kappa +c_0 + O(t))\binom{n-1}{i} -  (n-1)\binom{n-2}{i-1}\\
& = \binom{n-1}{i} (\kappa + c_0 + O(t)  - i)\ge \frac{c_0}{2},
\end{align*}
if $t\le \bar t$ for some sufficiently small $\bar t = \bar t(n, \theta, \eta)>0$. Combining the above inequalities, we finally arrive at
\begin{align*}
&~ n c_t \theta_t^{n-1} - (n-1) \chi_t\wedge\theta_t^{n-2}\\
\ge & \frac{c_0}{2} \sum_{i=0}^\kappa \eta^i\wedge (t\theta)^{n-1-i}\ge \delta_0 \theta_t^{n-1},
\end{align*}
where we may take $$\delta_0 = \frac{c_0}{2 \max_{i=0,\ldots,\kappa}\{\binom{n-1}{i}\}}.$$
\end{proof}

From now on, we additionally impose that $0< t\le \bar t $. For any $s\in [0,1]$, we denote 
$$\theta_{t,s} = \theta_t + \ddbar(s \phi_t), $$
 where $\phi_t$ is the solution to the $J$-equation \eqref{eqn:J}.
\begin{lemma}[\cite{Sun}]
There exists a uniform constant $c_1 = c_1(n,\theta,\eta)>0$ such that 
\begin{equation}\label{eqn:sun}
n c_t (\theta_{t,s}) ^{n-1}  - (n-1) \chi_t \wedge( \theta_{t,s})^{n-2} \ge c_1 (1-s)^{n-1}( \theta_t)^{n-1}. 
\end{equation}

\end{lemma}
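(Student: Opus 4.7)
The inequality \eqref{eqn:sun} is pointwise, so I fix $x \in X$ and choose local holomorphic coordinates simultaneously diagonalizing $\theta_t$ (as the identity) and $\omega := \theta_t + \ddbar\phi_t$ (as $\mathrm{diag}(\mu_1,\ldots,\mu_n)$ with $\mu_i > 0$). Let $\lambda_i$ denote the diagonal entries of $\chi_t$ in this frame; by \eqref{eqn:2.5} we have $\lambda_i \in [c_0, 1+c_0]$. The $J$-equation \eqref{eqn:J} becomes
$$\sum_{i=1}^n \frac{\lambda_i}{\mu_i} = nc_t,$$
while the cone condition \eqref{eqn:cone} reads $nc_t - \sum_{i\ne l}\lambda_i \ge \delta_0$ for every $l$. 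In addition, $\mu_i \ge c_0/(nc_t)$ uniformly, because $\lambda_i/\mu_i \le nc_t$ and $\lambda_i \ge c_0$.

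In this frame $\theta_{t,s} = \mathrm{diag}(a_1,\ldots,a_n)$ with $a_i := (1-s) + s\mu_i \ge (1-s)$, and a direct computation shows
$$nc_t\,\theta_{t,s}^{n-1} - (n-1)\chi_t \wedge \theta_{t,s}^{n-2} = (n-1)!\sum_l G_l(s)\,\omega_{\hat l}, \qquad \theta_t^{n-1} = (n-1)!\sum_l \omega_{\hat l},$$
where $\omega_{\hat l}$ is the $(n-1,n-1)$-form $\bigwedge_{k\ne l}\omega_k$ (with $\omega_k := \tfrac{i}{2}dz_k\wedge d\bar z_k$), and
$$G_l(s) := \prod_{k\ne l}a_k\Bigl[nc_t - \sum_{i\ne l}\frac{\lambda_i}{a_i}\Bigr].$$
Thus \eqref{eqn:sun} reduces to the scalar inequalities $G_l(s) \ge c_1(1-s)^{n-1}$ for $l = 1,\ldots,n$.

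The crucial step will be the algebraic identity obtained by substituting $nc_t = \sum_j \lambda_j/\mu_j$ into the first term of $G_l$ and using $a_j - \mu_j = (1-s)(1-\mu_j)$:
$$G_l(s) = \frac{\lambda_l}{\mu_l}\prod_{k\ne l}a_k + (1-s)\sum_{j\ne l}\frac{\lambda_j(1-\mu_j)}{\mu_j}\prod_{k\ne j,l}a_k,$$
which already isolates an explicit factor of $(1-s)$. I plan to treat two regimes separately. For $s \le s_0 := \delta_0/(2nc_t)$, the cone condition together with the crude bound $|1-\mu_i|/a_i \le 1/(1-s)$ and $\lambda_i \le 1+c_0$ yields $G_l(s) \ge (\delta_0/2)(1-s)^{n-1}$. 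For $s \in [s_0, 1)$, the uniform lower bound $\mu_i \ge c_0/(nc_t)$ forces $a_i \ge s_0 c_0/(nc_t) =: a_* > 0$, and the key identity combined with $\lambda_i \in [c_0, 1+c_0]$ and the $J$-equation produces the required lower bound after balancing the potentially negative contributions (those $j \ne l$ with $\mu_j > 1$) against the positive leading term $(\lambda_l/\mu_l)\prod_{k\ne l}a_k$.

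The main obstacle is this second regime, where the negative summands in the identity can in principle dominate the positive leading term, particularly when $\mu_l$ is large (so $\lambda_l/\mu_l$ is small). The careful algebraic balancing needed here, exploiting the precise interplay between the cone condition and the $J$-equation, is the essential content of Sun's original argument in the $J$-equation setting.
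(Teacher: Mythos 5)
There are two genuine gaps here. The first is in the reduction to scalar inequalities: in a frame that simultaneously diagonalizes $\theta_t$ and $\hat\theta_t=\theta_t+\ddbar\phi_t$, the form $\chi_t=\eta+c_0\theta_t$ is in general \emph{not} diagonal (there is no reason for $\eta$ to be diagonal in that frame), so $\chi_t\wedge\theta_{t,s}^{n-2}$ contains off-diagonal components proportional to $\tfrac{i}{2}dz_i\wedge d\bar z_j\wedge\bigwedge_{k\ne i,j}\tfrac{i}{2}dz_k\wedge d\bar z_k$ with $i\ne j$, and your claimed identity $nc_t\,\theta_{t,s}^{n-1}-(n-1)\chi_t\wedge\theta_{t,s}^{n-2}=(n-1)!\sum_l G_l(s)\,\omega_{\hat l}$ fails. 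Positivity of the diagonal coefficients $G_l(s)$ is necessary but not sufficient for positivity of the $(n-1,n-1)$-form, and full positivity is exactly what is used downstream, where this form is wedged against the rank-one form $\sqrt{-1}\partial\phi_t\wedge\bar\partial\phi_t$ in the Moser iteration. (Extracting only the diagonal consequences of the $J$-equation \eqref{eqn:J} and the cone condition \eqref{eqn:cone}, as you do, is harmless, since top-degree wedges with a diagonal $(n-1)$-st power see only the diagonal of $\chi_t$; but the conclusion you need is a form inequality, not a diagonal one.)

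The second, more serious, gap is that the scalar inequality is not actually proved where it is hard: for $s\ge s_0$ you write down the key identity and then state that the required balancing ``is the essential content of Sun's original argument,'' which is precisely the step that constitutes the lemma. (Your small-$s$ regime also invokes the bound $|1-\mu_i|/a_i\le 1/(1-s)$, which is false, e.g.\ at $s=0$ with $\mu_i$ large; that regime can instead be handled directly from $a_i\ge 1-s$, $\lambda_i/a_i\le\lambda_i/(1-s)$ and the cone condition.) For comparison, the paper's proof avoids both issues: taking $\chi_t$ as the reference form, it uses concavity of $s\mapsto\sigma_{n-1;i}(\theta_{t,s})/\sigma_{n-2;i}(\theta_{t,s})$ to interpolate between the endpoint bounds $\ge 1/(nc_t)$ at $s=1$ (from the $J$-equation, via \cite{SW}) and $\ge(1+\bar\delta_0)/(nc_t)$ at $s=0$ (from \eqref{eqn:cone}), and then concavity of $s\mapsto\sigma_{n-1;i}(\theta_{t,s})^{1/(n-1)}$ to convert the resulting relative bound into \eqref{eqn:sun}; no case analysis on the eigenvalues $\mu_i$ is needed. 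As written, your proposal identifies the right ingredients but does not establish the lemma.
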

\begin{proof}
The proof of this lemma is the same as that of Lemma 2.3 in \cite{Sun}. The point is that the constant $c_1$ here is independent of $t$. For completeness, we include a proof here. We view $\chi_t$ as the reference form in the definition of Hessian operators: for any positive $(1,1)$-form $\theta$
$$\sigma_n(\theta) = \frac{\theta^n}{\chi_t^n},\quad \sigma_{n-1}(\theta) = \frac{n\theta^{n-1}\wedge \chi_t}{\chi_t^n}.$$
We write $\hat \theta_t = \theta_t + \ddbar \phi_t =  \theta_{t,1}$. It is clear that $\theta_{t,s} = s \hat \theta_t + (1-s)\theta_t$. Since for each $i=1,\ldots, n$, the function $$s\mapsto \frac{\sigma_{n-1; i} (\theta_{t,s})}{\sigma_{n-2; i}(\theta_{t,s})}$$ is concave, it follows that
\begin{equation}\label{eqn:sun1}
 \frac{\sigma_{n-1; i} (\theta_{t,s})}{\sigma_{n-2; i}(\theta_{t,s})} \ge s  \frac{\sigma_{n-1; i} (\hat \theta_{t})}{\sigma_{n-2; i}(\hat \theta_{t})} + (1-s)  \frac{\sigma_{n-1; i} (\theta_t)}{\sigma_{n-2; i}(\theta_t)}.
\end{equation}
The first term on the right-hand side of \eqref{eqn:sun1}, $  \frac{\sigma_{n-1; i} (\hat \theta_{t})}{\sigma_{n-2; i}(\hat \theta_{t})} $, is bigger than $\frac{1}{nc_t}$ (see \cite{SW}). By the cone condition \eqref{eqn:cone}, the second term on the right-hand side of \eqref{eqn:sun1}, $\frac{\sigma_{n-1; i} (\theta_t)}{\sigma_{n-2; i}(\theta_t)}$, is no less than 
$$\frac{1}{(1-\frac{\delta_0}{nc_t}) nc_t}\ge (1+\frac{\delta_0}{nc_t}) \frac{1}{nc_t}\ge (1+\bar \delta_0) \frac{1}{nc_t},$$ where $\bar \delta_0 = \frac{\delta_0}{\max_{t\in (0,\bar t]} n c_t}$ is a uniform positive constant. 
Thus the inequality \eqref{eqn:sun1} yields 
\begin{equation}\label{eqn:sun2}
 \frac{\sigma_{n-1; i} (\theta_{t,s})}{\sigma_{n-2; i}(\theta_{t,s})} \ge \frac{s}{nc_t} +\frac{(1 -s)(1+\bar \delta_0)}{ nc_t}. 
 \end{equation}
 In terms of $(n-1,n-1)$-forms, \eqref{eqn:sun2} is equivalent to 
 \begin{equation}\label{eqn:sun3}
 nc_t \theta_{t,s}^{n-1} - (n-1) \chi_t\wedge \theta_{t,s}^{n-2} \ge \bar \delta_0(1-s) (n-1) \chi_t\wedge \theta_{t,s}^{n-2}.
 \end{equation}
On the other hand, since the function $s\mapsto \sigma_{n-1; i}(\theta_{t,s})^{1/(n-1)}$ is concave, we have 
$$ \sigma_{n-1; i}(\theta_{t,s})^{1/(n-1)} \ge s  \sigma_{n-1; i}(\hat \theta_{t})^{1/(n-1)} + (1-s)  \sigma_{n-1; i}(\theta_t)^{1/(n-1)}\ge   (1-s)  \sigma_{n-1; i}(\theta_t)^{1/(n-1)},$$ 
which implies that
$$\chi_t \wedge \theta_{t,s}^{n-2} \ge (1-s)^{n-1} \chi_t \wedge \theta_t^{n-2}.$$
Combining this with \eqref{eqn:sun3} gives that
 \begin{equation}\label{eqn:sun4}
 nc_t \theta_{t,s}^{n-1} - (n-1) \chi_t\wedge \theta_{t,s}^{n-2} \ge \bar \delta_0(1-s)^{n} (n-1) \chi_t\wedge \theta_t^{n-2}\ge c_0\bar  \delta_0 (n-1) (1-s)^n \theta_t^{n-1}. 
  \end{equation}
The lemma is proved with $c_1 =  c_0\bar  \delta_0 (n-1)$.
\end{proof}

We will use the Moser iteration argument to prove the $C^0$ estimates of $\phi_t$. To this end, we need the following uniform Sobolev inequality for the reference metrics $\theta_t$. (Note that by direct computations, the metric $\theta_t$ satisfies the conditions in \cite{GPSS23}, see also their Example 4.1.)

\begin{lemma}[Theorem 2.1 and (4.10) of \cite{GPSS23}]
There exist a constant $q = q(n,X)>1$ and a constant $C=C(n, \theta, \eta)>0$ such that for any $u\in C^1(X)$,
\begin{equation}\label{eqn:Sob}
\Big(\frac{1}{V_t} \int_X |u|^{2q} \theta_t^n\Big)^{1/q} \le \frac{C}{V_t} \int_{X} (u^2 + |\nabla u|_{\theta_t}^2) \theta_t^n.
\end{equation}
\end{lemma}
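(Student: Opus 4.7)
The plan is to apply Theorem 2.1 of \cite{GPSS23}, which provides a uniform Sobolev inequality on a compact K\"ahler manifold given control of the cohomology class and a Nash-type entropy bound relative to a fixed background. Since the family $\theta_t = \eta + t\theta$ is explicit, the task reduces to verifying those hypotheses with constants independent of $t \in (0, \delta)$.

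First I would fix a background K\"ahler form $\omega_X = \eta + \theta$ so that $\theta_t \le \omega_X$ for all $t \in (0, \delta)$. A direct binomial expansion gives
\[ V_t = \sum_{i=0}^{\kappa} \binom{n}{i} t^{n-i} [\eta]^i \cdot [\theta]^{n-i}, \]
so $V_t \sim \binom{n}{\kappa} t^{n-\kappa} [\eta]^\kappa \cdot [\theta]^{n-\kappa}$ as $t \to 0$; in particular $V_t > 0$ although it collapses at the rate $t^{n-\kappa}$. The cohomological quantities $[\theta_t] \cdot [\omega_X]^{n-1}$ are polynomials in $t$ and trivially bounded.

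The core step is to verify the Nash-type entropy hypothesis
\[ \frac{1}{V_t} \int_X \Big|\log \Big(\frac{1}{V_t}\frac{\theta_t^n}{\omega_X^n}\Big)\Big|^p\, \theta_t^n \le K \]
for some $p > n$, uniformly in $t$. After factoring $t^{n-\kappa}$ out of both $V_t$ and $\theta_t^n$, the normalized density $V_t^{-1} \theta_t^n / \omega_X^n$ is a polynomial ratio in $t$ whose only possible degeneration occurs along the critical locus of the pluricanonical map $\pi$. There its logarithm is dominated pointwise by $C + |\log |s||$ for a local defining section $s$ of the ramification divisor, and the resulting $L^p$-integrability against $\theta_t^n$ follows from standard plurisubharmonic integrability estimates.

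The main potential obstacle is precisely that $\eta$ degenerates along the singular fibers of $\pi$, which could a priori make the entropy blow up as $t \to 0$. However, because $\eta$ is pulled back from $\omega_{FS}$ via the \emph{fixed} holomorphic map $\pi$, the singular locus and the order of vanishing are independent of $t$, and the required uniform entropy bound is exactly the content of Example 4.1 of \cite{GPSS23}. Once the cohomological and entropy hypotheses are verified, Theorem 2.1 of \cite{GPSS23} applies to the family $\theta_t$ and yields \eqref{eqn:Sob} with $q = q(n, X) > 1$ and $C = C(n, \theta, \eta)$.
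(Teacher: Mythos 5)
Your proposal matches the paper's treatment: the lemma is quoted from Theorem 2.1 and (4.10) of \cite{GPSS23}, and the paper likewise disposes of the hypotheses by noting that $\theta_t=\eta+t\theta$ satisfies them "by direct computations," pointing to Example 4.1 of \cite{GPSS23} for the uniform Nash entropy bound of the degenerating reference family. Your verification of the cohomological bounds, the asymptotics $V_t\sim t^{n-\kappa}$, and the $L^p$ log-integrability of the normalized density along the critical locus of $\pi$ simply spells out the details the paper leaves implicit, so the two arguments are essentially the same.
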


From these, we can now prove the uniform $L^\infty$ estimate for $\phi_t$, which finishes the proof of Proposition \ref{prop:2.1}.

\begin{prop}
There exists a uniform constant $C>0$ that is independent of $t \in (0,\bar t]$ such that 
$$\sup_X (-\phi_t)\le C.$$
\end{prop}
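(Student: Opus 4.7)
The plan is to perform a Moser iteration driven by the cone positivity in \eqref{eqn:sun} and the uniform normalized Sobolev inequality \eqref{eqn:Sob}. Set $u := -\phi_t\ge 0$, so that $\sup_X u$ is exactly the quantity to bound. The starting point is the differential identity
\[
\chi_t\wedge\theta_t^{n-1} - c_t\theta_t^n \;=\; -\int_0^1 \ddbar u\wedge T_s\, ds,
\qquad T_s := nc_t\theta_{t,s\phi_t}^{n-1} - (n-1)\chi_t\wedge\theta_{t,s\phi_t}^{n-2},
\]
obtained by starting from the $J$-equation $\chi_t\wedge\theta_{t,\phi_t}^{n-1}=c_t\theta_{t,\phi_t}^n$ and differentiating the one-parameter family $s\mapsto \chi_t\wedge\theta_{t,s\phi_t}^{n-1}-c_t\theta_{t,s\phi_t}^n$ in $s\in[0,1]$.

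Next, I would multiply this identity by $u^p$ for $p\ge 1$, integrate over $X$, and integrate by parts using closedness of each $T_s$. Combined with the pointwise bound $|\chi_t\wedge\theta_t^{n-1}-c_t\theta_t^n|\le C\theta_t^n$ (which follows from \eqref{eqn:2.5} and the uniform boundedness of $c_t$ from Lemma \ref{lemma at}) and the cone lower bound $T_s\ge c_1(1-s)^{n-1}\theta_t^{n-1}$ from \eqref{eqn:sun}, this gives the Caccioppoli-type estimate
\[
\int_X u^{p-1}|\nabla u|^2_{\theta_t}\,\theta_t^n \;\le\; \frac{C}{p}\int_X u^p\,\theta_t^n,
\]
with $C$ independent of $t$ and $p$. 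Setting $v:=u^{(p+1)/2}$ turns this into $\int_X|\nabla v|^2_{\theta_t}\theta_t^n\le C(p+1)\int_X u^p\theta_t^n$. Dividing by $V_t$, plugging into \eqref{eqn:Sob}, and applying Jensen's inequality on the probability measure $\mu_t := V_t^{-1}\theta_t^n$ yields the recursion
\[
\|u\|_{L^{(p+1)q}(\mu_t)}\;\le\; \bigl(C(p+1)\bigr)^{1/(p+1)}\max\bigl(\|u\|_{L^{p+1}(\mu_t)},\,1\bigr).
\]
Iterating with $\beta_k=2q^k$ and using convergence of $\prod_k(C\beta_k)^{1/\beta_k}$, I obtain
$\sup_X u\le C\max(\|u\|_{L^2(\mu_t)},1)$ uniformly in $t$.

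The main obstacle I foresee is the initial step: the uniform bound $\|u\|_{L^2(\mu_t)}\le C$. Because $V_t\to 0$ when $\kappa<n$, the classical compactness estimate $\int_X(-\phi_t)\theta^n\le C$ (available from $\phi_t/C\in\psh(X,\theta)$ with $\sup_X\phi_t=0$) does \emph{not} translate into a uniform average bound $V_t^{-1}\int_X(-\phi_t)\theta_t^n\le C$. To handle this, I plan to combine the Caccioppoli estimate above at $p=1$ with the Poincar\'e inequality that follows from \eqref{eqn:Sob} on the probability space $(X,\mu_t)$, and to exploit the minimizer property $J_{\chi_t,\theta_t}(\phi_t)\le J_{\chi_t,\theta_t}(0)=0$ together with the normalization $\sup_X\phi_t=0$, to produce a uniform bound on $\bar u := \int_X u\,d\mu_t$. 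Once $\bar u\le C$ is in hand, the interpolation $\|u\|_{L^2(\mu_t)}^2\le (\sup_X u)\,\bar u$ combined with the Moser step $\sup_X u\le C\|u\|_{L^2(\mu_t)}$ absorbs into $\sup_X u\le C'(1+\bar u)$, closing the argument.
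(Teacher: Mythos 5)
Your overall strategy is the same as the paper's: a Moser iteration driven by the uniform cone estimate \eqref{eqn:sun} and the normalized Sobolev inequality \eqref{eqn:Sob}, starting from the integrated form of the $J$-equation. The only cosmetic difference is that you test against powers $u^p=(-\phi_t)^p$ while the paper tests against exponentials $e^{-p\phi_t}$; either weight yields the same Caccioppoli-type inequality and the same recursion. Up to the point where you reduce the problem to a uniform bound on $\|u\|_{L^2(\mu_t)}$ (or $\bar u$), your argument is sound.

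The gap is in the base case, and your diagnosis of the difficulty there is actually backwards. You assert that the fixed-measure estimate $\int_X(-\phi_t)\,\theta^n\le C$ does \emph{not} transfer to the normalized measure $\mu_t=V_t^{-1}\theta_t^n$ because $V_t\to 0$. In fact it does transfer, because one has the \emph{pointwise} domination $V_t^{-1}\theta_t^n\le C\,\theta^n$ uniformly in $t$: writing $\theta_t^n=\sum_{i=0}^{\kappa}\binom{n}{i}t^{n-i}\eta^i\wedge\theta^{n-i}$ (the terms with $i>\kappa$ vanish since $\eta$ is pulled back from the $\kappa$-dimensional $X_{can}$), each summand is $\le C\,t^{n-i}\theta^n\le C\,t^{n-\kappa}\theta^n$ for $t\le\delta$, while $V_t\ge c\,t^{n-\kappa}$. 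Hence $\mu_t\le C\theta^n$ and the uniform $\alpha$-invariant bound $\int_Xe^{-\alpha_0\phi_t}\theta^n\le C$ (already invoked in the proof of Lemma \ref{lemma 2.1}) immediately gives $\int_X(-\phi_t)\,d\mu_t\le C\int_X(-\phi_t)\,\theta^n\le C'$, which is exactly the base case you need. This is precisely how the paper closes the iteration (there via the interpolation $e^{-q\phi_t}\le(\sup_Xe^{-\phi_t})^{q-\alpha_0}e^{-\alpha_0\phi_t}$ followed by absorption of $(\sup_Xe^{-\phi_t})^{1-\alpha_0/q}$ into the left-hand side). By contrast, the alternative you propose --- extracting $\bar u\le C$ from the Poincar\'e inequality and the minimizer property $J_{\chi_t,\theta_t}(\phi_t)\le 0$ --- is left entirely as a plan and is not obviously easier; as written, your proof does not establish the starting bound and is therefore incomplete until you substitute the observation above.
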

\begin{proof}
We follow the arguments in \cite{Sun} closely. For any $p>1$, we consider the integral
\begin{equation}\label{eqn:integral}
 \int_X e^{-p \phi_t} \big(  c_t (\theta_{t,\phi_t}^n - \theta_t^n) - \chi_t \wedge ( \theta_{t, \phi_t}^{n-1} - \theta_t^{n-1}  )   \big).
\end{equation}
On one hand, this integral is 
\begin{equation}\label{eqn:integral2}
 \int_X e^{-p \phi_t} \big(  -  c_t   \theta_t^n + \chi_t \wedge  \theta_t^{n-1}     \big)
\le   C \int_X e^{-p\phi_t} \theta_t^n. 
\end{equation}
On the other hand, the integral in \eqref{eqn:integral} is
\begin{align*}
&~ \int_X e^{-p \phi_t}\ddbar \phi_t \wedge \Big( \int_0^1 n c_t \theta_{t,s}^{n-1} - (n-1) \chi_t \wedge \theta_{t,s}^{n-2} ds \Big)\\
= &~ p\int_X e^{-p \phi_t} \sqrt{-1}\partial \phi_t\wedge \bar \partial \phi_t \wedge \Big( \int_0^1 n c_t \theta_{t,s}^{n-1} - (n-1) \chi_t \wedge \theta_{t,s}^{n-2} ds \Big)\\
\ge &~ p\int_X e^{-p \phi_t} \sqrt{-1}\partial \phi_t\wedge \bar \partial \phi_t \wedge \Big( \int_0^1 c_1 (1-s)^{n} ds \theta_t^{n-1}\Big)\\
\ge &~ c_2 p\int_X e^{-p \phi_t} \sqrt{-1}\partial \phi_t\wedge \bar \partial \phi_t \wedge  \theta_t^{n-1},
\end{align*}
for some $c_2>0$ that depends on $n,\theta,\eta$ but is independent of $t$ and $p$. This inequality together with \eqref{eqn:integral2} yield that for some uniform constant $C'>0$ 
\begin{equation}\label{eqn:Moser}
\frac{1}{V_t}\int_X |\nabla e^{-\frac{p}{2} \phi_t}|_{\theta_t}^2 \theta_t^n \le \frac{C' p }{V_t}\int_X e^{-p\phi_t} \theta_t^n.
\end{equation}
Applying the Sobolev inequality \eqref{eqn:Sob} to $u := e^{-p\phi_t/2}$ and using \eqref{eqn:Moser}, we obtain
\begin{equation}\label{eqn:Moser1}
\Big(\frac{1}{V_t} \int_X e^{-q {p\phi_t}{}} \theta_t^n\Big)^{1/q} \le \frac{C p} {V_t} \int_X e^{-p\phi_t} \theta_t^n.
\end{equation}
We now apply the inequality \eqref{eqn:Moser1} with $p_k = q^k$ for $k = 1,2,\ldots$, and \eqref{eqn:Moser1} reads
\begin{equation}\label{eqn:Moser2}
\Big(\frac{1}{V_t} \int_X e^{- {p_{k+1}\phi_t}{}} \theta_t^n\Big)^{1/p_{k+1}} \le C^{1/q} p_k^{1/q}\Big( \frac{1} {V_t} \int_X e^{-p_k\phi_t} \theta_t^n\Big)^{1/p_k}.
\end{equation}
Iterating \eqref{eqn:Moser2} gives
\begin{equation}\label{eqn:Moser3}
\begin{split}
\Big(\frac{1}{V_t} \int_X e^{- {p_{k+1}\phi_t}{}} \theta_t^n\Big)^{1/p_{k+1}} \le & ~C^{\sum_{j=1}^k q^{-j}} q^{\sum_{j=1}^k j q^{-j}}\Big( \frac{1} {V_t} \int_X e^{-q\phi_t} \theta_t^n\Big)^{1/q}\\
\le &~ C  \Big( \frac{1} {V_t} \int_X e^{-q\phi_t} \theta_t^n\Big)^{1/q}.
\end{split}\end{equation}
Letting $k\to\infty$ yields that 
\begin{equation}\label{eqn:final1}
\sup_X e^{-\phi_t}\le C  \Big( \frac{1} {V_t} \int_X e^{-q\phi_t} \theta_t^n\Big)^{1/q}. 
\end{equation}
Finally noting that $\frac{1}{V_t} \theta_t^n \le C \theta^n$ for a uniform constant $C>0$, so 
\begin{align*}  \Big( \frac{1} {V_t} \int_X e^{-q\phi_t} \theta_t^n\Big)^{1/q} \le & ~  C\big( \sup_X e^{-\phi_t}\big)^{\frac{q-\alpha_0}{q}}   \Big(  \int_X e^{-\alpha_0\phi_t}  \theta^n\Big)^{1/q} \\
\text{(by $\alpha$-invariant) }\le & ~  C\big( \sup_X e^{-\phi_t}\big)^{\frac{q-\alpha_0}{q}}, 
\end{align*}
which combined with \eqref{eqn:final1} gives the desired estimate
$$\sup_X e^{-\phi_t}\le C.$$
\end{proof}

\section{From entropy bound to $L^\infty$ estimates}

\label{sec:Linfty}

Given the uniform entropy bound of $F_t$, the $L^\infty$ estimates of $\varphi_t$ and $F_t$ have been proved in \cite{GP}. For completeness, we provide a sketched proof.

Note that by Proposition \ref{prop:2.1}, we have 
\begin{equation}\label{eqn:3.1}\int_X |F_t| e^{F_t} \theta^n \le  C, \end{equation}
and from \eqref{eqn:new3.1} we also have
\begin{equation}\label{eqn:3.2}
\frac{1}{V_t}\int_X (-\varphi_t ) \omega_t^n \le C. 
\end{equation}
Denote $\beta = 1/10$. 
We solve the auxiliary complex Monge-Amp\`ere equations as in \cite{GPT, GP}
$$(\theta_t + \ddbar \psi_k)^n = \frac{\tau_k(-\varphi_t +  \beta F_t)}{A_k} V_t e^{F_t} \theta^n,\quad \sup_X \psi_k = 0,$$
where $\tau_k(x): \mathbb R\to \mathbb R_+$ is a family of positive smooth function that decreases to $x \chi_{\mathbb R_+}(x)$, and $A_k$ is a constant that makes the equation solvable,
$$A_k = \int_X \tau_k(-\varphi_t + \beta F_t) e^{F_t} \theta^n \to \int_\Omega (-\varphi_t + \beta F_t) e^{F_t} \theta^n=: A_\infty,$$
and here $\Omega = \{-\varphi_t + \beta F_t>0\}$. The equations \eqref{eqn:3.1} and \eqref{eqn:3.2} imply that $A_\infty$ is uniformly bounded from above. So we can find a uniform constant $C>0$ such that for any $t>0$, we can find a $k_0$ (possibly depending on $t$) such that $A_k\le C$ for any $k\geq k_0$. In the following, we always assume that $k\geq k_0$.

Consider the test function $$\Psi = -\varepsilon (-\psi_k + \Lambda)^{\frac{n}{n+1}} - \varphi_t + \beta F_t,$$
with the constants chosen such that 
$$\Lambda^{\frac{1}{n+1}} = \frac{2n}{n+1} \varepsilon, \quad \varepsilon = \frac{[(n+1)(n+\beta\overline R_t )]^{n/(n+1)}}{n^{2n/(n+1)}}A_k^{1/(n+1)}.$$
We claim that $\sup_X \Psi\le 0$. If the maximum of of $\Psi$ is obtained at some point in $X\backslash \Omega$, we are done. So assume $\Psi$ take maximum at $x_{\max}\in \Omega$, then at $x_{\max}$ 
\begin{align*}
0\ge & ~ \Delta_{\omega_t} \Psi\\
\ge &~ \frac{n \varepsilon}{n+1} (-\psi_k + \Lambda)^{-\frac{1}{n+1}} \tr_{\omega_t} \theta_{t,\psi_k} -\frac{n \varepsilon}{n+1} (-\psi_k + \Lambda)^{-\frac{1}{n+1}} \tr_{\omega_t} \theta_{t} \\
&~ - n + \tr_{\omega_t} \theta_t -  \beta \overline{R}_t - \beta\tr_{\omega_t} \eta\\
\ge &~ \frac{n^2 \varepsilon}{n+1} (-\psi_k + \Lambda)^{-\frac{1}{n+1}}  \Big(\frac{\theta_{t,\psi_k}^n} {\omega_t^n} \Big)^{\frac{1}{n}} - n -  \beta \overline{R}_t,
 \end{align*}
 by the choice the constants $\varepsilon, \Lambda$. This implies that at $x_{\max}$, $\Psi\le 0$. Hence the claim is proved. Since $\varepsilon\le C$ and $\Lambda\le C$, we obtain 
$$\beta F_t\le -\varphi_t + \beta F_t\le C (-\psi_k + \Lambda)^{\frac{n}{n+1}}.$$ 
Then for any $\epsilon>0$, we can find a constant $C_\epsilon>0$ such that 
 $$\beta F_t\le \epsilon (-\psi_k) + C_\epsilon.$$
 Again, using $\alpha$-invariant, this shows that for any $p>1$
\begin{equation}\label{eqn:Fe}\int_X e^{p F_t} \theta^n \le C_p.\end{equation}
By the family version of Ko\l{}odziej's uniform estimate \cite{K}, developed in \cite{DP} and \cite{EGZ}, we have $\| \varphi_t\|_{L^\infty}\le C$ (see also \cite{GPT}). 

To show the $L^\infty$ estimates of $F_t$, we need the following mean-value inequality in \cite{GPSS22} for the Laplace operator $\Delta_{\omega_t}$.
\begin{lemma}[Lemma 5.1 of \cite{GPSS22}]\label{lemma 2.6}
Under the condition \eqref{eqn:Fe} on $F_t$, there is a uniform constant $C = C(n, p, \theta,\eta)>0$ such that for any $C^2$ function $u$ with $\Delta_{\omega_t} u\ge -a$ for some $a>0 $, the following inequality holds
\begin{equation}\label{eqn:MVI}
\sup_X u\le C\big(a + \frac{1}{V_t} \int_X |u | \omega_t^n  \big).
\end{equation}
\end{lemma}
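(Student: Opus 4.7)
The plan is to deduce the mean-value inequality from a Green's function representation together with a uniform lower bound on the Green's function of $\Delta_{\omega_t}$, following the strategy of \cite{GPSS22}. Let $G_t(x,y)$ be the Green's function normalized by
\[
\Delta_{\omega_t,y} G_t(x,y) = \frac{1}{V_t} - \delta_x(y), \qquad \int_X G_t(x,y)\,\omega_t^n(y) = 0.
\]
For any $C^2$ function $u$ on $X$ one has the representation formula
\[
u(x) - \frac{1}{V_t}\int_X u\,\omega_t^n = -\int_X G_t(x,y)\,\Delta_{\omega_t} u(y)\,\omega_t^n(y).
\]
Granted a uniform lower bound $V_t G_t(x,y) \ge -C_0$ on $X\times X$, the hypothesis $-\Delta_{\omega_t} u \le a$ together with $\int_X \Delta_{\omega_t} u\,\omega_t^n = 0$ and the zero-mean property of $G_t$ gives
\[
u(x) - \frac{1}{V_t}\int_X u\,\omega_t^n \le a \int_X \Bigl(G_t(x,y) + \tfrac{C_0}{V_t}\Bigr)\omega_t^n(y) = a\, C_0,
\]
from which \eqref{eqn:MVI} is immediate after taking the supremum over $x$ and bounding $\frac{1}{V_t}\int_X u\,\omega_t^n$ by $\frac{1}{V_t}\int_X |u|\,\omega_t^n$.

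The crux of the argument is therefore the uniform lower bound $V_t G_t \ge -C_0$, with $C_0$ depending only on $n,p,\theta,\eta$ through the $L^p$ density bound \eqref{eqn:Fe}. This can be established by the same auxiliary Monge-Amp\`ere technique already used in Section~\ref{sec:Linfty} for the bound on $\varphi_t$. Fix $x\in X$ and set $v = -V_t G_t(x,\cdot)$, which satisfies $\Delta_{\omega_t} v = -1$ away from $x$ and has vanishing mean with respect to $V_t^{-1}\omega_t^n$. One solves
\[
(\theta_t + \ddbar \psi_k)^n = \frac{\tau_k(v)}{A_k}\, V_t e^{F_t}\theta^n, \qquad \sup_X \psi_k = 0,
\]
with $\tau_k$ a smooth positive approximation of $s \chi_{\mathbb R_+}(s)$ and $A_k$ the normalization constant. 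Applying the maximum principle to the test function $\Psi = -\varepsilon(-\psi_k + \Lambda)^{n/(n+1)} + v$, with $\varepsilon,\Lambda$ chosen in terms of $A_k$ exactly as in Section~\ref{sec:Linfty}, produces a pointwise bound $v \le C A_k^{1/(n+1)}$; letting $k\to\infty$ and noting $A_k \to A_\infty = \int_{\{v > 0\}} v\, e^{F_t}\theta^n$ yields $\sup_X v \le C A_\infty^{1/(n+1)}$.

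The main obstacle is closing the loop by controlling $A_\infty$ uniformly in $t$, which amounts to an a priori $L^1$ integrability bound $\int_X v^+ e^{F_t}\theta^n \le C$ for $v = -V_t G_t(x,\cdot)$. Such a bound would follow by a bootstrap that inserts the pointwise estimate above into itself, combined with H\"older's inequality and the $L^p$ density control \eqref{eqn:Fe} used to absorb the weight $e^{F_t}$. Once this preliminary integrability is established, the lower bound $V_t G_t \ge -C_0$ is in hand, and the Green's representation in the first paragraph delivers \eqref{eqn:MVI} with $C = C(n,p,\theta,\eta)$, reproducing Lemma~5.1 of \cite{GPSS22} in our setting.
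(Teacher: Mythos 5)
First, a point of comparison: the paper does not prove this lemma at all --- it is quoted as Lemma~5.1 of \cite{GPSS22} and used as a black box --- so the only benchmark is the argument of that reference. Your route (Green's representation formula plus a uniform lower bound $V_t G_t \ge -C_0$ for the Green's function of $\Delta_{\omega_t}$, the latter obtained from auxiliary complex Monge--Amp\`ere equations) is exactly the strategy of \cite{GPSS22}, and your first paragraph is correct as written: with the stated normalization one has $u(x)-\frac{1}{V_t}\int_X u\,\omega_t^n=-\int_X G_t(x,y)\Delta_{\omega_t}u\,\omega_t^n(y)\le aC_0$, which gives \eqref{eqn:MVI}.

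The genuine gap is the one you flag yourself: the uniform control of $A_\infty=\int_X v^+e^{F_t}\theta^n$ for $v=-V_tG_t(x,\cdot)$, which you assert ``would follow by a bootstrap'' without carrying it out. This is not a routine loose end; it is where essentially all of the work in \cite{GPSS22} lies, and the bootstrap as you describe it (``inserting the pointwise estimate into itself'') is circular: the pointwise bound $v\le C A_k^{1/(n+1)}(-\psi_k+\Lambda)^{n/(n+1)}$ is only useful once $\psi_k$ is controlled, and a Ko{\l}odziej-type $L^\infty$ bound for $\psi_k$ requires the density $\tau_k(v)A_k^{-1}e^{F_t}$ to lie in $L^q(\theta^n)$ for some $q>1$, which already presupposes integrability of powers of $v$ --- the very thing being proved. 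The circle is broken not by iterating the sup bound but by \emph{integrating} the pointwise bound: since $\psi_k$ is a sup-normalized $\theta_t$-psh function, the uniform $\alpha$-invariant estimate of \S\ref{sec:entropy} gives a $t$-independent bound on $\int_X(-\psi_k)^{1+\epsilon}\theta^n$; H\"older's inequality with this, with \eqref{eqn:Fe}, and with the sublinear exponent $n/(n+1)$ then yields $A_\infty\le C A_\infty^{1/(n+1)}$, hence $A_\infty\le C'$. Only after that does one return to the pointwise estimate --- with $A_k$ replaced by $\max(A_k,1)$ if necessary so that the density stays in $L^q$ and $\|\psi_k\|_{L^\infty}$ is uniformly bounded --- to conclude $\sup_X v\le C_0$. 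Note also that the zero-mean property of $G_t$ alone gives $\int G_t^+=\int G_t^-$ but no bound on either, so it cannot substitute for this step. In short: correct skeleton, same as the cited source, but the central estimate of the proof is left unproved.
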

We first apply Lemma \ref{lemma 2.6} to the function $u: = F_t - \varphi_t$, which satisfies
\begin{align*}\Delta_{\omega_t} u = &~ -\overline{R}_t - \tr_{\omega_t} \eta - n +\tr_{\omega_t}\theta_t \ge -\overline{R}_t - n,
 \end{align*}
and this implies that
$$\sup_X F_t\le \sup_X u \le C\big( \overline{R}_t + n + \int_X (|F_t| + |\varphi_t|   ) e^{F_t} \theta^n \big)\le C.$$
To get the lower bound of $F_t$, we apply Lemma \ref{lemma 2.6} to the function $u: = - F_t$, which fulfills the equation
$$\Delta_{\omega_t} u = -\Delta_{\omega_t} F_t = \overline{R}_t + \tr_{\omega_t} \eta \ge -\overline{R}_t,$$
and we obtain
$$\sup_X (-F_t) \le C\big(|\overline{R}_t| + \int_X |F_t| e^{F_t} \theta^n   \big)\le C,$$
thus the lower bound of $F_t$ follows, and we finish the proof of Theorem \ref{thm:tech}.

\bigskip



\end{document}